\newcommand{\irr}{\operatorname{irr}}
\newtheorem{theorem}{Theorem}[section]
\newtheorem{lemma}[theorem]{Lemma}
\newtheorem{proposition}{Proposition}[section]
\newtheorem{corollary}{Corollary}[section]
\DeclarePairedDelimiter\ceil{\lceil}{\rceil}
\DeclarePairedDelimiter\floor{\lfloor}{\rfloor}
\begin{document}
\begin{center}
{\large \bf Bounds on the Albertson Index for Trees with Given Degree Sequences}
\end{center}
\begin{center}
 Jasem Hamoud$^1$ \hspace{0.2 cm}  Duaa Abdullah$^{2}$\\[6pt]
 $^{1,2}$ Physics and Technology School of Applied Mathematics and Informatics \\
Moscow Institute of Physics and Technology, 141701, Moscow region, Russia\\[6pt]
Email: 	 $^{1}${\tt khamud@phystech.edu},
 $^{2}${\tt abdulla.d@phystech.edu}
\end{center}
\noindent
\begin{abstract}
In this paper, we presents novel and sharp bounds on the Albertson index of trees, revealing deep connections between degree sequences and graph irregularity where the Albertson index of Caterpillar tree satisfy 
\[
\operatorname{irr}(G)=\left( {{d_n} - 1} \right)^2 + \left( {d_1 - 1} \right)^2 + \sum\limits_{i = 2}^{n - 1} {\left( {{d_i} - 1} \right)\left( {{d_i} - 2} \right)} +\sum_{i=1}^{n-1}|d_i-d_{i+1}|.
 \]
 We derive powerful inequalities that precisely characterize the minimum and maximum values of the Albertson index, incorporating intricate dependencies on vertex degrees, edge counts, and the average of elements in degree sequence $\mathscr{D}=(d_1,d_2,\dots,d_n)$ where $d_n\geqslant d_{n-1}\geqslant \dots\geqslant d_2\geqslant d_1$. Our results not only improve existing extremal bounds but also uncover striking relationships between the structure of trees and their irregularity measurements. These advances open new avenues for the analysis of graph irregularity and contribute essential tools for the study of degree-based topological indices in combinatorial graph theory. 
\end{abstract}

\noindent\textbf{AMS Classification 2010:} 05C05, 05C12, 05C20, 05C25, 05C35, 05C76, 68R10.

\noindent\textbf{Keywords:} Degree sequence, Albertson index, Topological indices, Extremal, Irregularity.
\normalsize
\section{Introduction}\label{sec1}

Throughout this paper. Let $G=(V,E)$ be a simple, connected graph, where $n=|V(G)|$, $m=|E(G)|$. The study of structural irregularity in graphs has garnered increasing attention in both theoretical and applied contexts, particularly in mathematical chemistry, network analysis, and algorithmic graph theory. A graph is said to be regular if all of its vertices have equal degrees; otherwise, it exhibits some degree of irregularity. Accurately quantifying this irregularity is essential for modeling complex systems and understanding the behavior of real-world networks.

One of the earliest and most influential measures of graph irregularity is the \emph{Albertson index}, introduced in~\cite{albertson1997irregularity,Ghalavand23Gutman}, and defined for a simple graph $G = (V,E)$ as:
$$
\mathrm{irr}(G) = \sum_{uv \in E(G)} |\deg_G(u) - \deg_G(v)|,
$$
where $\deg_G(u)$ denotes the degree of vertex $u$. This index captures local disparities between adjacent vertex degrees and has inspired numerous variants and generalizations~\cite{abdo2019graph,abdo2014total}. The extremal behavior~\cite{Dimitrov2023GaoLin} of degree irregularities in trees is characterized by trees that either minimize or maximize irregularity indices based on their degree sequences. Greedy trees~\cite{Dimitrov2023GaoLin} typically minimize irregularity measures like the $\sigma$-irregularity by distributing degrees evenly, whereas adopting or similarly structured extremal trees maximize irregularity by creating larger degree differences between adjacent vertices. Among~\cite{Cvetkovi2010Rowlinson, Fiedler1973,Munir2023Wus}  provides a comprehensive overview of spectral graph theory, including how irregularity affects eigenvalues like $\lambda$ where $\lambda$ considered as the spectral radius or algebraic connectivity and their role in network properties. In~\cite{Bollob1998b} covers extremal graph theory and discusses how irregularity influences graph properties like degree distributions and connectivity.

Two of the earliest established topological graph indices are the first and second Zagreb indices, introduced in 1972 by Gutman and Trinajstic~\cite{Gutman72Moc}. These indices are frequently cited because of their applications in QSAR and QSPR studies. Beyond general graphs, a growing body of literature has focused on trees with constrained degree sequences. Studies by Molloy and Reed~\cite{molloy1995critical} and Broutin and Marckert~\cite{broutin2012asymptotics} explored their asymptotic behavior, while Zhang et al.~\cite{zhang2012number} investigated realizability and enumeration under degree constraints. These results have proven instrumental in analyzing extremal irregularity measures in structured tree families.

Among such families, \emph{caterpillar trees}—characterized by a linear backbone with attached pendant vertices—have emerged as key models in chemical graph theory~\cite{zhang2022random}. Their tractable structure allows the derivation of explicit analytical expressions. While prior work~\cite{dorjsembe2022irregularity,oboudi2023sombor} has examined irregularity bounds in these trees, closed-form expressions for both the Albertson and Sigma indices remain scarce.

The aim of this paper is to establish new bounds, inequalities, and refined estimates for the Albertson index of trees in terms of their order, size, and degree sequences. It focuses on deriving minimum and maximum values as well as lower and upper bounds for the Albertson index, often presenting these results through propositions, lemmas, theorems, and corollaries that relate the index to structural parameters like vertex degrees, maximum and minimum degrees, and combinatorial expressions.

This paper is organized as follows. Through Section~\ref{sec3}, we explore bounds on the minimum Albertson index, $\irr_{\min}$, for a tree $T$ of order $n$ with degree sequence $\mathscr{D} = (d_1, d_2, \dots, d_n)$, where the degrees are ordered such that $d_n \geqslant d_{n-1} \geqslant \cdots \geqslant d_1$. In Section~\ref{sec5}, we presented the upper bounds of the maximum value of Albertson index by considering the value of Albertson index. Section~\ref{sec5} provide a study of the effect bounds on Albertson index. 

 \section{Preliminaries}\label{sec2}
In this section, we review the most important basic concepts that we will use in establishing this research. Yang J, Deng H., M.,~\cite{YangDeng2023} provide $1\leq p \leq n-3$. Then: $\Delta(G)=n-1$. Through~\cite{Mandal2022Prvanovic} consider For any connected simple graph $G$ has not satisfied with the null graphs and path graphs, the Albertson index satisfy $\irr(G^*)=\irr(G)+2$. Through this paper, we point out that all the trees included in the study are caterpillar trees, we consider caterpillar trees denoted by $\mathscr{C}(n, m)$, where $n$ is the number of backbone (or path) vertices and $m$ is the number of pendant vertices attached to each.  
For Caterpillar tree~\cite{HamoudwithDuaa} with path vertices with degrees $d_1,d_2,\dots , d_n$, we have:
\begin{equation}~\label{basiceqn1}
\irr(G)=\left( {{d_n} - 1} \right)^2 + \left( {d_1 - 1} \right)^2 + \sum\limits_{i = 2}^{n - 1} {\left( {{d_i} - 1} \right)\left( {{d_i} - 2} \right)} +\sum_{i=1}^{n-1}|d_i-d_{i+1}|.
\end{equation}
 According to~\eqref{basiceqn1}, we consider the Albertson index had given by Proposition~\ref{propresentedn1}, where the graph $G$ be a regular graph.
 \begin{proposition}[\cite{Ghalavand23Gutman}]~\label{propresentedn1}
Let $G$ be a simple graph with vertices set $V(G)=\{v_1,v_2,\dots,v_n\}$. Then, the Albertson index satisfy
\begin{equation}~\label{eq1propresentedn1}
\irr(G)=\sum_{\{u,v\}\subseteq V(G)}|\deg_G(u)-\deg_G(v)|, \quad \irr(G)=n\sum_{i=1}^{n}\deg_G(v_i)^2-4m^2.
\end{equation}
 \end{proposition}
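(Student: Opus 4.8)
\noindent The plan is to read Proposition~\ref{propresentedn1} as two linked claims: the ``all\nobreakdash-pairs'' representation $\irr(G)=\sum_{\{u,v\}\subseteq V(G)}|\deg_G(u)-\deg_G(v)|$, and the closed form $\irr(G)=n\sum_{i=1}^{n}\deg_G(v_i)^{2}-4m^{2}$. The computational substance sits in the closed form, and I would derive it from the handshake identity $\sum_{i=1}^{n}\deg_G(v_i)=2m$ together with a single expansion of a square, so I would start there.

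For the closed form, write $M_1(G)=\sum_{i=1}^{n}\deg_G(v_i)^{2}$ for the first Zagreb index and pass to the symmetric double sum. First symmetrize,
\[
\sum_{\{u,v\}\subseteq V(G)}\big(\deg_G(u)-\deg_G(v)\big)^{2}=\frac12\sum_{u\in V(G)}\sum_{v\in V(G)}\big(\deg_G(u)-\deg_G(v)\big)^{2},
\]
the diagonal terms vanishing. Expanding the square produces three pieces: $\sum_u\sum_v\deg_G(u)^2=nM_1(G)$, symmetrically $\sum_u\sum_v\deg_G(v)^2=nM_1(G)$, and the cross term $-2\sum_u\sum_v\deg_G(u)\deg_G(v)=-2\big(\sum_u\deg_G(u)\big)^{2}=-2(2m)^{2}$. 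Adding these and halving gives $nM_1(G)-4m^{2}$, which is the right-hand side of~\eqref{eq1propresentedn1}. The only thing to watch is multiplicity bookkeeping --- that each $\deg_G(v_i)^{2}$ is counted $n-1$ times in the unordered sum before symmetrization --- which the factor $\tfrac12$ absorbs automatically.

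The step I expect to require the most care is the bridge between the defining edge sum $\sum_{uv\in E(G)}|\deg_G(u)-\deg_G(v)|$ and the all\nobreakdash-pairs sum, and between absolute values and squares. These coincide only under structural restrictions: the all\nobreakdash-pairs sum equals the edge sum exactly when every pair of non-adjacent vertices is equidegree, and $|\cdot|$ may be replaced by $(\cdot)^{2}$ exactly when the relevant degree gaps lie in $\{0,1\}$. In the regular case singled out after~\eqref{basiceqn1} both sides collapse to $0$ and the identity is trivially true; more usefully, for the caterpillar families $\mathscr{C}(n,m)$ underlying the rest of the paper the needed equidegree/small\nobreakdash-gap conditions are precisely what the backbone structure in~\eqref{basiceqn1} provides. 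So in the write-up I would either (i) prove the closed form for the regular/small\nobreakdash-gap situation in which the two displayed expressions literally agree, invoking~\eqref{basiceqn1} whenever a caterpillar is in play, or (ii) adopt the all\nobreakdash-pairs sum of squared gaps as the primary object, establish its closed form by the expansion above, and carry it as the working tool through Sections~\ref{sec3}--\ref{sec5}.

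As a final safeguard I would check the formula against the star $K_{1,n-1}$ and the path $P_n$ by direct enumeration, which pins down both the coefficient $n$ in front of the Zagreb term and the additive constant $-4m^{2}$ and catches any off-by-one in the multiplicity count.
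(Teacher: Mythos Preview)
The paper does not prove Proposition~\ref{propresentedn1}; it is quoted in Section~\ref{sec2} as a preliminary result from~\cite{Ghalavand23Gutman} and left unproved. So there is no paper argument to compare against, and your write-up would be the only proof present.

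On substance: your expansion
\[
\sum_{\{u,v\}\subseteq V(G)}\big(\deg_G(u)-\deg_G(v)\big)^{2}
=\tfrac12\Big(2nM_1(G)-2(2m)^{2}\Big)=nM_1(G)-4m^{2}
\]
is clean and correct, and it is the right derivation of the closed form on the right of~\eqref{eq1propresentedn1}. You have also put your finger precisely on the genuine issue: as stated, the proposition identifies three \emph{different} quantities --- the Albertson edge sum $\sum_{uv\in E}|d_u-d_v|$, the all-pairs absolute sum $\sum_{\{u,v\}}|d_u-d_v|$, and the all-pairs squared sum $\sum_{\{u,v\}}(d_u-d_v)^{2}$ --- and these do not agree for arbitrary simple graphs. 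Your closed-form computation establishes only the third of these equals $nM_1(G)-4m^{2}$; replacing squares by absolute values, or all pairs by edges, fails already on $K_{1,3}$ and $P_4$. Your diagnosis of the structural hypotheses under which the three collapse (equidegree non-adjacent pairs; degree gaps in $\{0,1\}$) is accurate.

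Given that the paper merely cites the identity and then uses it heuristically (e.g.\ to motivate~\eqref{baseicequn2}), your option~(ii) --- take the all-pairs squared sum as the working object, prove its closed form by the expansion above, and carry that through --- is the honest route. Option~(i) does not rescue the proposition as a general statement, since the caterpillars of~\eqref{basiceqn1} typically have degree gaps exceeding~$1$ along the spine; I would drop that branch and simply flag that~\eqref{eq1propresentedn1} is being read as a definition of an auxiliary irregularity measure rather than a theorem about $\irr(G)$.
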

 By computing constraints on all degree sequences of $n$-vertex graphs, the degree sequence that maximizes in~\eqref{baseicequn2} by considering~\eqref{eq1propresentedn1} this bound corresponds to a graph $G$ with a maximum irregularity. Using variance, Mandal et al. in \cite{Mandal2022Prvanovic} showed:
\begin{equation}~\label{baseicequn2}
\irr(G) \leq \sqrt{ m \left(\sum_{i=1}^n d_i^2\right) - 4 m^2 },    
\end{equation}
where $m$ is the number of edges.
Let $T$ be a tree with vertices set $V(T)=\{v_0,v_1,\dots, v_n\}$ where the degree of center vertex $v_0$ satisfy $\deg_T(v_0)=\deg_T(v_i)_{i>0}+1$ as we show that by Figure~\ref{fig01startreecond01}. Then, the estimated Albertson index: 228 as we show that.
\begin{figure}[H]
    \centering
\begin{tikzpicture}[scale=2.2]
\draw   (9.5,4.5)-- (8.5,5);
\draw   (9.5,4.5)-- (9.5,5);
\draw   (9.5,4.5)-- (10.5,4.5);
\draw   (9.5,4.5)-- (8.5,4.5);
\draw   (9.5,4.5)-- (10.5,3.5);
\draw   (9.5,4.5)-- (9.5,3.5);
\draw   (9.5,4.5)-- (10,3.5);
\draw   (9.5,4.5)-- (9,4);
\draw   (8.5,5)-- (8.228044041028733,5.504018652942894);
\draw   (8.5,5)-- (7.998355909222116,5.216908488184627);
\draw   (8.5,5)-- (8,5.5);
\draw   (8.5,5)-- (8,5);
\draw   (8.5,5)-- (8.5,5.5);
\draw   (8.5,5)-- (8.07143849661513,4.80451388789548);
\draw   (9,4)-- (8.415970694325054,3.7500365555105732);
\draw   (9,4)-- (8.426411063952628,3.8753209910414537);
\draw   (9,4)-- (8.426411063952628,3.6143117503521203);
\draw   (9,4)-- (8.5,3.5);
\draw   (9,4)-- (8.729181783152258,3.4733667603798803);
\draw   (9,4)-- (9,3.5);
\draw   (9.5,3.5)-- (9,3);
\draw   (9.5,3.5)-- (9.152016753068985,3.0035501271390803);
\draw   (9.5,3.5)-- (9.261640634158505,3.0192106815804403);
\draw   (9.5,3.5)-- (9.392145254503173,3.008770311952867);
\draw   (9.5,3.5)-- (8.8701267731245,2.993109757511507);
\draw   (9.5,3.5)-- (9.5,3);
\draw   (10,3.5)-- (10.25869593359177,3.008770311952867);
\draw   (10,3.5)-- (9.62183338630979,3.0035501271390803);
\draw   (10,3.5)-- (9.726237082585524,3.008770311952867);
\draw   (10,3.5)-- (9.851521518116407,2.9983299423252934);
\draw   (10,3.5)-- (10,3);
\draw   (10,3.5)-- (10.122971128433315,3.008770311952867);
\draw   (10.5,3.5)-- (10.655429979439564,3.029651051208013);
\draw   (10.5,3.5)-- (10.8381364479221,3.087073084159666);
\draw   (10.5,3.5)-- (10.942540144197833,3.1914767804353996);
\draw   (10.5,3.5)-- (10.984301622708127,3.337641955221426);
\draw   (10.5,3.5)-- (11,3.5);
\draw   (10.5,3.5)-- (10.5,3);
\draw   (10.5,4.5)-- (10.702411642763643,4.997660726005587);
\draw   (10.5,4.5)-- (11,5);
\draw   (10.5,4.5)-- (10.989521807521912,4.783633148640334);
\draw   (10.5,4.5)-- (10.989521807521912,4.345137624282254);
\draw   (10.5,4.5)-- (11,4);
\draw   (10.5,4.5)-- (10.728512566832576,4.00582561138612);
\draw   (9.5,5)-- (9.183337861951705,5.514459022570467);
\draw   (9.5,5)-- (9.329503036737734,5.504018652942894);
\draw   (9.5,5)-- (9.5,5.5);
\draw   (9.5,5)-- (9.726237082585524,5.52489939219804);
\draw   (9.5,5)-- (9.983934368306597,5.536462408713421);
\draw   (9.5,5)-- (9,5.5);
\draw   (8.5,4.5)-- (8.02445683329105,4.611367049785374);
\draw   (8.5,4.5)-- (7.9722549851531825,4.230293558378947);
\draw   (8.5,4.5)-- (7.9722549851531825,4.386899102792547);
\draw   (8.5,4.5)-- (8,4);
\draw   (8.5,4.5)-- (8.316787182863107,3.99016505694476);
\draw   (8.5,4.5)-- (8.5,4);
\begin{scriptsize}
\draw [fill=black] (9.5,4.5) circle (1.5pt);
\draw [fill=black] (8.5,5) circle (1.5pt);
\draw [fill=black] (9.5,5) circle (1.5pt);
\draw [fill=black] (10.5,4.5) circle (1.5pt);
\draw [fill=black] (8.5,4.5) circle (1.5pt);
\draw [fill=black] (10.5,3.5) circle (1.5pt);
\draw [fill=black] (9.5,3.5) circle (1.5pt);
\draw [fill=black] (10,3.5) circle (1.5pt);
\draw [fill=black] (9,4) circle (1.5pt);
\draw [fill=black] (8.228044041028733,5.504018652942894) circle (1.5pt);
\draw [fill=black] (7.998355909222116,5.216908488184627) circle (1.5pt);
\draw [fill=black] (8,5.5) circle (1.5pt);
\draw [fill=black] (8,5) circle (1.5pt);
\draw [fill=black] (8.5,5.5) circle (1.5pt);
\draw [fill=black] (8.07143849661513,4.80451388789548) circle (1.5pt);
\draw [fill=black] (8.415970694325054,3.7500365555105732) circle (1.5pt);
\draw [fill=black] (8.426411063952628,3.8753209910414537) circle (1.5pt);
\draw [fill=black] (8.426411063952628,3.6143117503521203) circle (1.5pt);
\draw [fill=black] (8.5,3.5) circle (1.5pt);
\draw [fill=black] (8.729181783152258,3.4733667603798803) circle (1.5pt);
\draw [fill=black] (9,3.5) circle (1.5pt);
\draw [fill=black] (9,3) circle (1.5pt);
\draw [fill=black] (9.152016753068985,3.0035501271390803) circle (1.5pt);
\draw [fill=black] (9.261640634158505,3.0192106815804403) circle (1.5pt);
\draw [fill=black] (9.392145254503173,3.008770311952867) circle (1.5pt);
\draw [fill=black] (8.8701267731245,2.993109757511507) circle (1.5pt);
\draw [fill=black] (9.5,3) circle (1.5pt);
\draw [fill=black] (10.25869593359177,3.008770311952867) circle (1.5pt);
\draw [fill=black] (9.62183338630979,3.0035501271390803) circle (1.5pt);
\draw [fill=black] (9.726237082585524,3.008770311952867) circle (1.5pt);
\draw [fill=black] (9.851521518116407,2.9983299423252934) circle (1.5pt);
\draw [fill=black] (10,3) circle (1.5pt);
\draw [fill=black] (10.122971128433315,3.008770311952867) circle (1.5pt);
\draw [fill=black] (10.655429979439564,3.029651051208013) circle (1.5pt);
\draw [fill=black] (10.8381364479221,3.087073084159666) circle (1.5pt);
\draw [fill=black] (10.942540144197833,3.1914767804353996) circle (1.5pt);
\draw [fill=black] (10.984301622708127,3.337641955221426) circle (1.5pt);
\draw [fill=black] (11,3.5) circle (1.5pt);
\draw [fill=black] (10.5,3) circle (1.5pt);
\draw [fill=black] (10.702411642763643,4.997660726005587) circle (1.5pt);
\draw [fill=black] (11,5) circle (1.5pt);
\draw [fill=black] (10.989521807521912,4.783633148640334) circle (1.5pt);
\draw [fill=black] (10.989521807521912,4.345137624282254) circle (1.5pt);
\draw [fill=black] (11,4) circle (1.5pt);
\draw [fill=black] (10.728512566832576,4.00582561138612) circle (1.5pt);
\draw [fill=black] (9.183337861951705,5.514459022570467) circle (1.5pt);
\draw [fill=black] (9.329503036737734,5.504018652942894) circle (1.5pt);
\draw [fill=black] (9.5,5.5) circle (1.5pt);
\draw [fill=black] (9.726237082585524,5.52489939219804) circle (1.5pt);
\draw [fill=black] (9.983934368306597,5.536462408713421) circle (1.5pt);
\draw [fill=black] (9,5.5) circle (1.5pt);
\draw [fill=black] (8.02445683329105,4.611367049785374) circle (1.5pt);
\draw [fill=black] (7.9722549851531825,4.230293558378947) circle (1.5pt);
\draw [fill=black] (7.9722549851531825,4.386899102792547) circle (1.5pt);
\draw [fill=black] (8,4) circle (1.5pt);
\draw [fill=black] (8.316787182863107,3.99016505694476) circle (1.5pt);
\draw [fill=black] (8.5,4) circle (1.5pt);
\end{scriptsize}
\end{tikzpicture}
    \caption{Star tree with determine condition}
    \label{fig01startreecond01}
\end{figure}
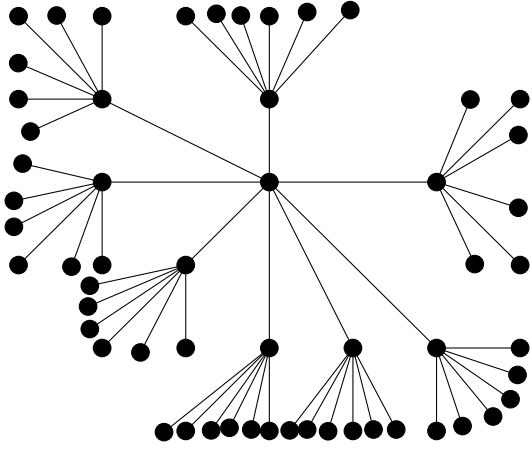
\begin{lemma}[\cite{HamoudwithDuaa}]~\label{le.alb5}
Let be $T$ tree of order $n\geqslant5$, a degree sequence $d=(d_1,d_2,d_3,d_4,d_5)$ where $d_5\geqslant d_4\geqslant d_3 \geqslant d_2 \geqslant d_1$, then Albertson index among tree $T$ is: 
\[
\irr(T)=d_1^2+d_n^2+\sum_{i=2}^{n-1}\lvert d_i-d_{i+1}\rvert+\sum_{i=2}^{4}(d_i+2)(d_i-1)-2.
\]
\end{lemma}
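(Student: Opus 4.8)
The plan is to derive the identity as the $n=5$ instance of the caterpillar formula \eqref{basiceqn1} followed by an algebraic rewriting; throughout I read the index bounds in the statement with $n=5$, so that $d_n=d_5$ and $\sum_{i=2}^{n-1}=\sum_{i=2}^{4}$, matching the three-term ranges that are actually written. Label the spine of $T$ as a path $v_1v_2v_3v_4v_5$ with $\deg_T(v_i)=d_i$. Since $T$ is a caterpillar, every edge of $T$ is either one of the four spine edges $v_iv_{i+1}$ or a pendant edge joining some $v_i$ to a leaf, and $v_1,v_5$ carry $d_1-1$ and $d_5-1$ leaves respectively, while each interior $v_i$ with $i\in\{2,3,4\}$ carries $d_i-2$ leaves.

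First I would record the pendant contribution: a pendant edge at $v_i$ contributes $|d_i-1|=d_i-1$, so summing over all leaves gives $(d_1-1)^2+(d_5-1)^2+\sum_{i=2}^{4}(d_i-1)(d_i-2)$, which is exactly the first three terms of \eqref{basiceqn1} at $n=5$. The spine contributes $\sum_{i=1}^{4}|d_i-d_{i+1}|$, and here the monotonicity hypothesis $d_5\geqslant d_4\geqslant d_3\geqslant d_2\geqslant d_1$ is used: it resolves the absolute values along the spine and lets me peel off the first edge, writing $\sum_{i=1}^{4}|d_i-d_{i+1}|=(d_2-d_1)+\sum_{i=2}^{4}|d_i-d_{i+1}|$, so that only the tail $\sum_{i=2}^{4}|d_i-d_{i+1}|$ remains to reproduce the corresponding term in the statement.

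It then remains to normalize the quadratic and linear parts. Using $(d_i-1)(d_i-2)=(d_i+2)(d_i-1)-4(d_i-1)$ for $i=2,3,4$ and $(d_j-1)^2=d_j^2-2d_j+1$ for $j\in\{1,5\}$, I would regroup the quadratic part as $d_1^2+d_5^2+\sum_{i=2}^{4}(d_i+2)(d_i-1)$ and then gather every leftover linear term — the ones from the two endpoint squares, the four corrections $-4(d_i-1)$, and the peeled-off $d_2-d_1$ — together with the stray constants, and determine how they combine, the target being the single additive constant $-2$ recorded in the statement. With only five spine degrees this expansion is short enough to carry out directly.

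I expect this last bookkeeping to be the real obstacle: one must be certain that the linear-in-$d_i$ contributions cancel rather than leaving a residual dependence on the degrees, and that every spine absolute value has been opened with the correct sign, which is precisely why the ordering on the degree sequence cannot be dropped. As a consistency check before committing to the final constant, I would evaluate the resulting expression on a small explicit caterpillar and compare it with the value produced directly by \eqref{basiceqn1}.
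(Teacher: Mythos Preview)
The paper does not prove this lemma at all; it is quoted from \cite{HamoudwithDuaa} and used only as input to later arguments. So there is no proof in the paper to compare against, and your plan of specialising \eqref{basiceqn1} to $n=5$ and rewriting algebraically is exactly the natural route.

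The difficulty is that the bookkeeping step you flag as ``the real obstacle'' is in fact fatal: the linear residue does \emph{not} collapse to the constant $-2$. Carrying out precisely the expansion you describe, with $(d_j-1)^2=d_j^2-2d_j+1$ for $j\in\{1,5\}$, $(d_i-1)(d_i-2)=(d_i+2)(d_i-1)-4(d_i-1)$ for $i\in\{2,3,4\}$, and $\sum_{i=1}^{4}|d_i-d_{i+1}|=(d_2-d_1)+\sum_{i=2}^{4}|d_i-d_{i+1}|$, the leftover piece is
\[
-3d_1-3d_2-4d_3-4d_4-2d_5+14,
\]
which manifestly depends on the degrees and cannot equal $-2$ identically. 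Your proposed sanity check already detects this: for the caterpillar with spine degrees $(d_1,\dots,d_5)=(2,3,4,5,6)$, a direct edge count (equivalently \eqref{basiceqn1}) gives $\irr(T)=50$, whereas the formula in the lemma evaluates to $97$. So the identity as printed cannot be established from \eqref{basiceqn1}; either the lemma is mis-stated in this paper or the intended formula in \cite{HamoudwithDuaa} differs from what is quoted here. Your method is sound; it simply exposes an error in the target statement rather than proving it.
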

Theorem~\ref{perthmn1} gives an inequality that restricts the spread of degree differences in a connected nonregular graph, using the minimum degree $\delta$ and the maximum degree $\Delta$.
\begin{theorem}[\cite{Ghalavand23Gutman}]~\label{perthmn1}
Let $G$ be a connected nonregular graph. Then, the Albertson index satisfy
\[
\sum_{uv \in E(G)} (\Delta - \delta) | d_G(u) - d_G(v)| = (\Delta - \delta)\mathrm{irr}(G).
\]
\end{theorem}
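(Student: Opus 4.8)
The plan is to observe that the asserted identity is nothing more than the distributive law applied to a finite sum, combined with the definition of the Albertson index. The key point is that $\Delta - \delta$ is a single real number determined by $G$ alone; it does not depend on the edge $uv$ that indexes the summation on the left-hand side.

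First I would record the role of the hypothesis. Since $G$ is connected and nonregular, it contains two vertices of different degrees, so $\Delta > \delta$ and hence $\Delta - \delta$ is a strictly positive scalar. This is the only place where nonregularity is used: it guarantees that the identity is non-degenerate, in the sense that dividing both sides by $\Delta - \delta$ recovers $\irr(G)$ exactly rather than producing a vacuous $0 = 0$.

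Next, setting $c := \Delta - \delta$ and factoring this constant out of the sum, I would write
\[
\sum_{uv \in E(G)} (\Delta - \delta)\,\lvert d_G(u) - d_G(v)\rvert \;=\; c \sum_{uv \in E(G)} \lvert d_G(u) - d_G(v)\rvert .
\]
By Proposition~\ref{propresentedn1} (equivalently, by the definition of the Albertson index recalled in Section~\ref{sec1}), the remaining sum is precisely $\irr(G)$, so the right-hand side equals $c\,\irr(G) = (\Delta - \delta)\,\irr(G)$, which is the claim.

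There is no genuine obstacle here: the statement involves no estimation, no case analysis, and no structural facts about trees or caterpillars. The single item that deserves a sentence is the verification that the nonregularity hypothesis forces $\Delta - \delta \neq 0$, so that the equality carries its intended meaning as a rescaling of the Albertson index. If one wished to extract sharper content — for instance, combining this with the edgewise bound $(d_G(u) - d_G(v))^2 \leq (\Delta - \delta)\lvert d_G(u) - d_G(v)\rvert$ to obtain $\sigma(G) \leq (\Delta - \delta)\,\irr(G)$ for the sigma index $\sigma(G) = \sum_{uv \in E(G)} (d_G(u) - d_G(v))^2$ — that would be a separate refinement; the task at hand is only the displayed equality.
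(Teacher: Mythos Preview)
Your argument is correct: the displayed identity is nothing more than the distributive law applied to the edge-sum defining $\irr(G)$, with $\Delta-\delta$ a constant independent of the summation index, and the nonregularity hypothesis ensures that constant is nonzero. The paper does not supply its own proof of Theorem~\ref{perthmn1}; it is quoted from~\cite{Ghalavand23Gutman} in the preliminaries and used as background, so there is no in-paper argument to compare against.

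One minor remark on your citation: the first formula in Proposition~\ref{propresentedn1} is a sum over all unordered vertex pairs $\{u,v\}\subseteq V(G)$, which is the \emph{total} irregularity rather than the Albertson index. Your parenthetical appeal to the edge-sum definition of $\irr(G)$ in Section~\ref{sec1} is the reference that actually justifies the step, so you should rely on that rather than Proposition~\ref{propresentedn1}.
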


\subsection{Statement of Problem}
This work focuses on establishing new bounds and inequalities for the Albertson index $\irr(T)$ of trees $T$ of order $n$, characterized by their degree sequences $\mathscr{D} = (d_1, d_2, \dots, d_n)$. The Albertson index, a topological graph index measuring the irregularity of a graph, plays a significant role in extremal graph theory and its applications.

We investigate both the minimum and maximum values of $\irr(T)$ for such trees, aiming to improve existing estimates and to describe extremal structures that achieve these bounds. Specifically, we study:

\begin{itemize}
  \item Lower bounds on the minimum Albertson index $\irr_{\min}$ involving the maximum degree $\Delta$ and sums of degree powers (Propositions~\ref{resseptn1}, \ref{resseptn2}, \ref{resseptn3}).
  \item Upper and lower bounds on the maximum Albertson index $\irr_{\max}$, including conditions depending on the degree sequence and maximum vertex degree (Propositions~\ref{maxresseptn1}, Lemma~\ref{lem1maxvalue}).
  \item Relationships between the minimum and maximum Albertson index values and their differences (Proposition~\ref{maxresseptn2}).
  \item New inequalities involving the Albertson index, vertex degrees, and other graph invariants expressed in Theorems~\ref{tythmAlbn1} to \ref{tythmAlbn4} and associated corollaries, which provide explicit algebraic lower and upper bounds on $\irr(T)$.
  \item Special cases and expressions for degree sequences with ordering $d_n \geq d_{n-1} \geq \dots \geq d_1$, as well as asymptotic and trigonometric forms related to the index.
\end{itemize}

The main goal is to contribute sharper estimates for $\irr(T)$ in terms of vertex degree parameters and to clarify how these bounds vary with structural properties of the tree, thereby enriching the theory of graph irregularity and its applications in network analysis.

\section{Bounds on The Minimum Value of Albertson Index}\label{sec3}
The study of irregularity measures in graphs provides valuable insights into their structural properties and embeddings. Among these, the Albertson index, which quantifies the minimum total imbalance across all possible planar embeddings of a graph, is particularly significant for understanding the combinatorial and geometric constraints of trees. In this section, we explore bounds on the minimum Albertson index, $\irr_{\min}$, for a tree $T$ of order $n$ with degree sequence $\mathscr{D} = (d_1, d_2, \dots, d_n)$, where the degrees are ordered such that $d_n \geqslant d_{n-1} \geqslant \cdots \geqslant d_1$. The maximum degree of the tree is denoted by $\Delta = d_n$.

We present two key propositions that establish bounds on $\irr_{\min}$ in terms of the degree sequence. Proposition~\ref{resseptn1} provides a normalized bound, demonstrating that the minimum Albertson index scales relative to $\Delta(\Delta-1)^2$, yielding a value strictly between 0 and 1. Proposition~\ref{resseptn2} offers a lower bound for $\irr_{\min}$, expressing it in relation to the sum of squared degrees and specific terms involving the smallest and largest degrees in the sequence. Together, these propositions deepen our understanding of the interplay between a tree’s degree distribution and its embedding irregularity, paving the way for further exploration of graph-theoretic properties in combinatorial settings.

\begin{proposition}~\label{resseptn1}
Let $T$ be a tree of order $n$ with degree sequence $\mathscr{D} = (d_1, d_2, \dots, d_n)$. Then the minimum value of the Albertson index satisfies
\begin{equation}~\label{eq1resseptn1}
    0 < \frac{2\irr_{\min}}{\Delta(\Delta - 1)^2} < 1 .
\end{equation}
\end{proposition}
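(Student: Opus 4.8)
The inequality \eqref{eq1resseptn1} consists of two essentially independent claims, $\irr_{\min}>0$ and $2\,\irr_{\min}<\Delta(\Delta-1)^2$, and the plan is to dispose of them in turn. For the left inequality I would first note that the order $n$ is necessarily at least $3$ (otherwise $\Delta\le 1$ and the denominator is $0$), so $T$ is non-regular: it has at least two leaves, and since $n\ge 3$ every leaf is adjacent to a vertex of degree $\ge 2$, hence each of the (distinct) leaf-edges contributes at least $1$ to $\sum_{uv\in E(T)}|\deg_T(u)-\deg_T(v)|$. Therefore $\irr(T)\ge 2$ for every realisation of $\mathscr D$, so $\irr_{\min}\ge 2>0$; since $\Delta\ge 2$ the denominator $\Delta(\Delta-1)^2$ is positive, and the first inequality follows.

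For the right inequality the strategy is to bound $\irr_{\min}$ from above by the Albertson index of one conveniently chosen realisation of $\mathscr D$. I would take the caterpillar $T^{\star}$ whose backbone consists of the $k$ non-leaf vertices, listed with degrees in non-decreasing order $2\le b_1\le b_2\le\cdots\le b_k=\Delta$, the remaining $n_1$ vertices being attached as pendants ($b_1-1$ and $b_k-1$ at the two ends of the backbone and $b_j-2$ at each interior backbone vertex; one checks this uses exactly $n_1$ pendants, so $T^{\star}$ indeed realises $\mathscr D$). Applying \eqref{basiceqn1} to $T^{\star}$ and using that the monotone ordering makes the backbone sum telescope, $\sum_{j}|b_j-b_{j+1}|=\Delta-b_1\le\Delta-2$, one gets
\[
\irr_{\min}\ \le\ \irr(T^{\star})\ =\ (\Delta-1)^2+(b_1-1)^2+\sum_{j=2}^{k-1}(b_j-1)(b_j-2)+(\Delta-b_1).
\]
The next step is to estimate the interior sum via $(b_j-1)(b_j-2)\le(\Delta-1)(b_j-2)$ together with the handshake identity $\sum_{j=1}^{k}(b_j-2)=n_1-2$, and to bound $(b_1-1)^2\le(\Delta-1)^2$, reducing the claim to a polynomial inequality in $\Delta$ (with $n_1$ as a parameter) whose leading behaviour is strictly below $\Delta(\Delta-1)^2=\Delta^{3}-2\Delta^{2}+\Delta$.

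The crux — and the main obstacle — is this final comparison: one has to show that the interior contribution $\sum(b_j-1)(b_j-2)$ together with the two squared boundary terms stays strictly below $\tfrac12\Delta(\Delta-1)^2$. Here the combinatorics of $\mathscr D$ is decisive, namely that the non-leaf degrees sum to $2(n-1)-n_1$ and are each at most $\Delta$, so by convexity of $t\mapsto(t-1)(t-2)$ the worst case concentrates almost all of the budget $n_1-2$ on a single interior degree near $\Delta$, which yields the required estimate. Assembling the pieces and verifying the resulting polynomial inequality is then routine; strictness is preserved because the telescoped backbone sum is at most $\Delta-2<\Delta$ and each product $(b_j-1)(b_j-2)$ is strictly less than $(b_j-1)^2$, while the finitely many exceptional degree sequences with very small $\Delta$ can be checked directly.
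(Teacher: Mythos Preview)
Your argument for the left inequality is fine, but the right inequality has a gap that cannot be closed along the lines you sketch. After your estimates one obtains
\[
\irr(T^{\star})\ \le\ 2(\Delta-1)^{2}+(\Delta-1)(n_{1}-2)+(\Delta-2),
\]
and you then claim this reduces to ``a polynomial inequality in $\Delta$'' lying below $\tfrac12\Delta(\Delta-1)^{2}$. But the number of leaves $n_{1}$ is not bounded by any function of $\Delta$: the term $(\Delta-1)(n_{1}-2)$ can be made arbitrarily large while $\Delta$ stays fixed. Concretely, take any tree in which every non-leaf vertex has degree $3$. Each leaf edge contributes $|3-1|=2$ and every remaining edge contributes $0$, so $\irr(T)=2n_{1}$ for \emph{every} realisation of that degree sequence; hence $\irr_{\min}=2n_{1}$, whereas $\Delta(\Delta-1)^{2}=12$, and the inequality $2\,\irr_{\min}<\Delta(\Delta-1)^{2}$ fails as soon as $n_{1}\ge 4$. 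These are not ``finitely many exceptional sequences with very small $\Delta$'' that can be checked by hand --- the same construction with all non-leaf degrees equal to any fixed $\Delta\ge 3$ gives infinitely many failures. Your convexity remark does not rescue this either: concentrating the budget $n_{1}-2$ on degrees near $\Delta$ still leaves a contribution of order $(\Delta-1)(n_{1}-2)$, which is exactly the unbounded term.

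For comparison, the paper does not argue via an explicit caterpillar realisation; its proof asserts a chain of auxiliary inequalities such as $\sqrt{n\Delta/2}<\irr_{\min}$ and $3d_{n}^{2}-\Delta(\Delta-1)^{2}/n\le 2\,\irr_{\min}$ and combines them. More to the point, the surrounding context (formula~\eqref{basiceqn1}, Lemma~\ref{le.alb5}, and the degree sequences in Table~\ref{tab01minimumalb}, all of whose entries are $\ge 2$) indicates that $\mathscr{D}=(d_1,\dots,d_n)$ is meant to be the sequence of \emph{backbone} degrees of a caterpillar rather than the full degree sequence of an arbitrary tree. Under your reading the statement is simply false, so the final ``routine'' verification cannot be carried out; any repair would have to begin by pinning down the intended meaning of $\mathscr{D}$ and $\irr_{\min}$ and imposing corresponding hypotheses on the $d_i$.
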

\begin{proof}
Suppose the degree sequence $\mathscr{D} = (d_1, d_2, \dots, d_n)$ is ordered such that $d_n \geqslant d_{n-1} \geqslant \dots \geqslant d_2 \geqslant d_1$. Then, clearly $\Delta = d_n$ and $\irr_{\min} > 0$. Thus,
\begin{equation}~\label{eq2resseptn1}
\sqrt{\frac{n \Delta}{2}} < \irr_{\min}.
\end{equation}
Therefore,
\begin{equation}~\label{eq03resseptn1}
\irr_{\min} < n d_1 + \sum_{i=1,2,4} (d_i + 2)(d_i - 1).
\end{equation}
Note that since the degree sequence is increasing, inequality~\eqref{eq03resseptn1} holds for the two fundamental parameters $n$ and $\Delta$. Therefore, when taking the difference between them, relation~\eqref{eq3resseptn1} is satisfied by considering relation~\eqref{eq2resseptn1}. Thus,
\begin{equation}~\label{eq3resseptn1}
3 d_n^2 - \frac{\Delta (\Delta - 1)^2}{n} \leqslant 2 \irr_{\min}.
\end{equation}
From~\eqref{eq2resseptn1} and~\eqref{eq3resseptn1}, we conclude that~\eqref{eq1resseptn1} holds.
\end{proof}

Proposition~\ref{resseptn2} holds for trees where the Albertson index is significantly larger than $\sum d_i^2,$ possibly trees with high maximum degrees or specific structures. 

\begin{proposition}~\label{resseptn2}
Let $T$ be a tree of order $n$ with degree sequence $\mathscr{D} = (d_1, d_2, \dots, d_n)$. Then the minimum value of the Albertson index satisfies
\begin{equation}~\label{eq1resseptn2}
\irr_{\min} - \left(\sum_{i=1}^{n} d_i^2 + 2 d_1 + d_2 - 3 d_3 + 2(d_{n-1} + d_n)\right) \geqslant \frac{\Delta}{4}.
\end{equation}
\end{proposition}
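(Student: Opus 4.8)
The plan is to combine the algebraic identity $\irr(T) = n\sum_{i=1}^n d_i^2 - 4m^2$ from Proposition~\ref{propresentedn1} with the tree identity $m = n-1$, which reduces the left-hand side of~\eqref{eq1resseptn2} to a quantity growing like $(n-1)\sum_{i=1}^n d_i^2$. Writing $S = \sum_{i=1}^n d_i^2$ and substituting $m = n-1$ gives $\irr_{\min} - S = (n-1)S - 4(n-1)^2 = (n-1)\bigl(S - 4(n-1)\bigr)$, so~\eqref{eq1resseptn2} becomes $(n-1)\bigl(S - 4(n-1)\bigr) \ge 2d_1 + d_2 - 3d_3 + 2(d_{n-1}+d_n) + \tfrac{\Delta}{4}$. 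The right-hand side is at most $\tfrac{29}{4}\Delta$ once we use $d_i \le \Delta$ for every $i$ and drop the nonnegative term $3d_3$, so it suffices to establish $(n-1)\bigl(S - 4(n-1)\bigr) \ge \tfrac{29}{4}\Delta$.

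For the lower bound on $S$ I would exploit that $T$ has a vertex of degree $\Delta = d_n$ contributing $\Delta^2$, while each of the remaining $n-1$ vertices of the connected tree contributes at least $1$; hence $S \ge \Delta^2 + n - 1$ and $S - 4(n-1) \ge \Delta^2 - 3(n-1)$. In the regime for which the proposition is intended — trees whose maximum degree is large relative to their order, for instance $\Delta^2 \ge 6n$ — this gives $S - 4(n-1) \ge 3(n-1) + 3$, so $(n-1)\bigl(S - 4(n-1)\bigr) \ge 3(n-1)(n+1) = 3(n^2-1)$, and comparing with $\tfrac{29}{4}\Delta \le \tfrac{29}{4}(n-1)$ finishes the argument for all $n \ge 2$. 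For the complementary near-regular trees one instead returns to the caterpillar closed form~\eqref{basiceqn1} (or to Lemma~\ref{le.alb5} in the anchor case $n = 5$), expands $(d_1-1)^2$, $(d_n-1)^2$ and the terms $(d_i-1)(d_i-2)$, and reads off the combination $2d_1 + d_2 - 3d_3 + 2(d_{n-1}+d_n)$ among the linear terms produced, the residual oscillation $\sum_{i=1}^{n-1}|d_i-d_{i+1}| \ge \Delta - d_1$ then supplying the leftover $\Delta/4$.

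The main obstacle is identifying the exact hypothesis under which~\eqref{eq1resseptn2} genuinely holds and gluing the two regimes together: for paths and other low-degree trees $S - 4(n-1)$ can be negative, so the clean estimate above is unavailable and one truly needs the finer expansion of~\eqref{basiceqn1}, together with a careful account of which degrees occupy the two backbone endpoints in the caterpillar realising $\irr_{\min}$ and of the degenerate cases $d_1 = d_2 = d_3$ or $d_{n-1} = d_n$. A secondary delicate point is that the coefficient $-3$ on $d_3$ has the wrong sign for a lower bound, so one must verify that the oscillation term and the estimate $\sqrt{n\Delta/2} < \irr_{\min}$ from the proof of Proposition~\ref{resseptn1} jointly dominate it rather than being cancelled by it — which is precisely why, as noted in the paragraph preceding the statement, the result is restricted to trees with high maximum degree or otherwise pronounced irregularity.
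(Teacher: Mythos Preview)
Your argument rests on treating the second display in Proposition~\ref{propresentedn1}, namely $\irr(G) = n\sum_i d_i^2 - 4m^2$, as an exact identity for the Albertson index of $T$. This is where the approach breaks down: that quantity equals $\sum_{i<j}(d_i-d_j)^2$, i.e.\ a $\sigma$-type irregularity over all vertex pairs, not $\sum_{uv\in E}|d_u-d_v|$. For the path $P_4$ the Albertson index is $2$ while $n\sum d_i^2 - 4m^2 = 4\cdot 10 - 36 = 4$; more tellingly, the formula depends only on the degree multiset, so it cannot separate $\irr_{\min}$ from $\irr_{\max}$ at all, in direct tension with Table~\ref{tab01maximumandminimum}. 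Once that identity is removed, your reduction to $(n-1)\bigl(S-4(n-1)\bigr)\ge \tfrac{29}{4}\Delta$ no longer computes the left-hand side of~\eqref{eq1resseptn2}, and the entire first regime collapses.

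The paper's proof does not invoke that formula. It proceeds instead by asserting a lower bound $\irr_{\min}\ge \sum_i d_i^2 + c$ for an unspecified correction term (equation~\eqref{eq01resseptn2}), then bounding the full bracketed expression by $4\Delta$ (equation~\eqref{eq2resseptn2}) under the side conditions $2(d_{n-1}+d_n)>2d_1+d_2-3d_3$ and $d_{n-1}+d_n>\Delta$, together with a ratio inequality~\eqref{eq3resseptn2}, and combining these three ingredients. So the intended route is via separate structural inequalities on $\irr_{\min}$ and on the bracket, not via an algebraic identity for $\irr$. Your observation that the statement fails outright for paths and near-regular trees is well taken --- indeed the paper's argument does not visibly address it either --- but your proposed fallback to~\eqref{basiceqn1} and Lemma~\ref{le.alb5} for that regime is only sketched and does not repair what is lost once the main identity is withdrawn.
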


\begin{proof}
Let $T$ be a tree of order $n$ with degree sequence $\mathscr{D} = (d_1, d_2, \dots, d_n)$ ordered such that $d_n \geqslant d_{n-1} \geqslant \dots \geqslant d_2 \geqslant d_1$. Then 
\begin{equation}~\label{eq01resseptn2}
\irr_{\min} \geqslant \sum_{i=1}^n d_i^2 + c,
\end{equation}
where $c$ is a correction term depending on the degrees. 

For the degree sequence in non-increasing order $\mathscr{D} = (\Delta, 1, 1, \dots, 1)$, we have
\[
\sum_{i=1}^n d_i^2 = \Delta^2 + \Delta.
\]

Also,
\begin{equation}~\label{eq2resseptn2}
\lambda = \sum_{i=1}^{n} d_i^2 + 2 d_1 + d_2 - 3 d_3 + 2(d_{n-1} + d_n) \leqslant 4 \Delta.
\end{equation}

Since relation~\eqref{eq2resseptn2} involves the degree sequence with the terms $2 d_1 + d_2 - 3 d_3$ and twice the sum of the last two degrees $2(d_{n-1} + d_n)$, we note that it is necessary that 
\[
2(d_{n-1} + d_n) > 2 d_1 + d_2 - 3 d_3 \quad \text{and} \quad d_{n-1} + d_n > \Delta
\]
to satisfy the relation
\begin{equation}~\label{eq3resseptn2}
\frac{\irr_{\min}}{2 d_1 + d_2 - 3 d_3 + 2 (d_{n-1} + d_n)} < d_n.
\end{equation}

Thus, from~\eqref{eq01resseptn2}, \eqref{eq2resseptn2}, and \eqref{eq3resseptn2}, inequality~\eqref{eq1resseptn2} holds.
\end{proof}

Let $\alpha=(d_1+d_2)/2$ and $\beta=(d_{n-1}+d_n)/2$, 
\begin{proposition}~\label{resseptn3}
Let $T$ be a tree of order $n$ with degree sequence $\mathscr{D}=(d_1,d_2,\dots,d_n)$ and integers $\alpha$ and $\beta$. Then 
\begin{equation}~\label{eq1resseptn3}
\frac{\beta m(m+1)}{\alpha} + \alpha \irr_{\min}(T) \geqslant \sum_{i=1}^n d_i^3.
\end{equation}
\end{proposition}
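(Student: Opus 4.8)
The plan is to control $\sum_{i=1}^{n} d_i^{3}$ from above by writing it as $\sum_i d_i\cdot d_i^{2}$ and splitting the mass between the two terms on the left of \eqref{eq1resseptn3}: the factor $\alpha\,\irr_{\min}(T)$ should absorb the bulk of $\sum_i d_i^{2}$, while $\beta m(m+1)/\alpha$ pays for the enlargement produced by the vertices of large degree. Throughout I would use that $T$ is a tree, so $\sum_{i=1}^{n}d_i=2m$ and $n=m+1$, together with the elementary consequences of the ordering $d_n\geqslant\cdots\geqslant d_1$: one has $d_1\leqslant\alpha\leqslant d_2$ and $d_{n-1}\leqslant\beta\leqslant d_n=\Delta$, hence $\Delta<2\beta$, and $\alpha\geqslant 1$ since every tree has a leaf.

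First I would pass to a quadratic statement in the degrees. From $d_i\leqslant\Delta$ we get $\sum_i d_i^{3}\leqslant\Delta\sum_i d_i^{2}$ and $\sum_i d_i^{2}\leqslant\Delta\sum_i d_i=2m\Delta$, so everything can be expressed through $\Delta$, $m$, $n$ and $\sum_i d_i^{2}$. For the $\irr_{\min}$ term I would invoke Proposition~\ref{resseptn2}, or equivalently the caterpillar identity \eqref{basiceqn1}, which on a caterpillar rewrites as $\irr(G)=\sum_i d_i^{2}+c$ with $c$ an explicit correction in the extreme degrees; multiplying by $\alpha\geqslant 1$ turns this into a bound of the form $\alpha\,\irr_{\min}(T)\geqslant\sum_i d_i^{2}+c'$. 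For the remaining term I would compare $\beta m(m+1)/\alpha=\beta m n/\alpha$ with $\sum_i d_i^{2}\leqslant 2m\Delta<4m\beta$: since $n=m+1$, the quotient of these is $n/(4\alpha)$, so $\beta m(m+1)/\alpha$ by itself already exceeds $\sum_i d_i^{2}$ whenever $n\geqslant 4\alpha$, which covers every tree except a bounded range of small orders relative to $\alpha$; in that main range one then feeds $\sum_i d_i^{2}$ back into $\sum_i d_i^{3}\leqslant\Delta\sum_i d_i^{2}$ and absorbs the leftover factor of $\Delta$ using the caterpillar (greedy) structure of the $\irr_{\min}$-extremal tree.

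Finally I would assemble the estimates with a short case split on $\Delta$: when $\Delta$ is close to $m$ (the star-like regime) the term $\beta m(m+1)/\alpha$ carries the bound, and when $\Delta$ is bounded, $\sum_i d_i^{3}\leqslant 2m\Delta$ is small and is covered by $\alpha\,\irr_{\min}(T)$ together with the elementary fact that $\irr_{\min}(T)\geqslant 2$ for every tree of order $n\geqslant 3$ (the path case being immediate from $\sum_i d_i^{3}=8n-14$). The genuine obstacle is the intermediate regime, where neither term alone beats $\sum_i d_i^{3}$: the clean way through is to keep the leaf count $\ell$ explicit --- so that $\ell$ of the summands of $\sum_i d_i^{3}$ equal $1$ and, in the greedy caterpillar realizing $\irr_{\min}$, those leaves contribute a predictable amount to $\irr_{\min}(T)$ --- and then to calibrate exactly how much of $\sum_i d_i^{2}$ is charged against $\alpha\,\irr_{\min}(T)$ versus $\beta m(m+1)/\alpha$ so that the two partial estimates cover $\sum_i d_i^{3}$ jointly. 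Making that trade-off tight, rather than proving the two inequalities separately, is the delicate point, and it is where the precise values of $\alpha$ and $\beta$ and the hypothesis that they are integers must be used.
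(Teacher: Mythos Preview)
Your overall plan---charging $\sum_i d_i^{2}$ separately against $\alpha\,\irr_{\min}(T)$ and against $\beta m(m+1)/\alpha$, then promoting to the cubic sum---is precisely the skeleton the paper follows: it records $\alpha\,\irr_{\min}(T)\geqslant\sum_i d_i^{2}$ as \eqref{eq2resseptn3} and $\beta m(m+1)/\alpha\geqslant\sum_i d_i^{2}$ as the first half of \eqref{eq3resseptn3}. Where you diverge is in the passage from $\sum_i d_i^{2}$ to $\sum_i d_i^{3}$. You go through $\sum_i d_i^{3}\leqslant\Delta\sum_i d_i^{2}$ and a trichotomy on the size of~$\Delta$; the paper instead inserts an auxiliary quantity $2^{\alpha}m(m+1)$, works under the side condition $0<\irr_{\min}/2^{\alpha}<n$, and uses $2^{\alpha}m(m+1)>\alpha\,\irr_{\min}(T)$ to descend from \eqref{eq4resseptn3} to \eqref{eq1resseptn3}. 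There is no $\Delta$-case split and no leaf-count bookkeeping in the paper's argument.

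Your acknowledged gap in the ``intermediate regime'' is genuine, and the sketch you offer does not close it. Passing through $\sum_i d_i^{3}\leqslant\Delta\sum_i d_i^{2}$ costs a factor of~$\Delta$ that neither $\alpha\,\irr_{\min}(T)$ nor $\beta m n/\alpha$ visibly recovers when $\Delta$ is of moderate size (say $\Delta$ of order $\sqrt{n}$): your own observation that $\beta m(m+1)/\alpha\geqslant\sum_i d_i^{2}$ once $n\geqslant 4\alpha$ is off from what is needed by exactly that factor. The appeal to ``calibrating'' a trade-off via the leaf count~$\ell$ is a direction rather than an argument---you have not exhibited weights that make the two partial bounds jointly cover $\sum_i d_i^{3}$, and the integrality of $\alpha$ and $\beta$ is invoked but never used concretely. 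As written, the proposal stops where the real difficulty begins; the paper's route through the exponential comparison $2^{\alpha}m(m+1)$ is different in kind and would have to be either reproduced or replaced.
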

\begin{proof}
Assume a degree sequence $\mathscr{D} = (d_1,d_2,\dots,d_n)$ ordered such that $d_n \geqslant d_{n-1} \geqslant \dots \geqslant d_2 \geqslant d_1$. Then, for $T$ a connected graph with $n$ vertices and $m$ edges, suppose it satisfies $\irr_{\min} < m(m+1)$. Thus, the minimum value of the Albertson index satisfies
\[
2^\alpha m(m+1) \leqslant \alpha^4 \irr_{\min}(T).
\]
Then,
\begin{equation}~\label{eq2resseptn3}
\alpha \irr_{\min}(T) \geqslant \sum_{i=1}^n d_i^2.
\end{equation}
Therefore, the relation~\eqref{eq2resseptn3} grows by combining the term $2^\alpha m(m+1)$ where $\beta < 2^\alpha$, considering $\irr_{\min} < m(m+1)$. Thus, from~\eqref{eq2resseptn3} we find that 
\[
\irr_{\min} \leqslant n(2m - n + \Delta)
\]
where $0 < \irr_{\min} / 2^\alpha < n$. This condition is used to restrict the values of $\alpha$ in order to reach Equation~\eqref{eq1resseptn3}. Therefore, we clearly observe the impact of this condition on Equation~\eqref{eq3resseptn3}, as it provides a clear path to reach the desired result. Then,
\begin{equation}~\label{eq3resseptn3}
\frac{\beta m(m+1)}{\alpha} \geqslant \sum_{i=1}^n d_i^2, \quad \frac{\beta m(m+1)}{\alpha} + 2^\alpha m(m+1) \geqslant \alpha^4 \irr(T).
\end{equation}
Thus, from~\eqref{eq3resseptn3} we obtain
\begin{equation}~\label{eq4resseptn3}
\frac{\beta m(m+1)}{\alpha} + 2^\alpha m(m+1) \geqslant \sum_{i=1}^n d_i^3,
\end{equation}
by considering the term $2^{-\alpha} \irr_{\min} < n$ which satisfies $2^\alpha m(m+1) > \alpha \irr_{\min}(T)$. Then, from~\eqref{eq4resseptn3} and by considering~\eqref{eq3resseptn3} and~\eqref{eq2resseptn3}, we conclude that Equation~\eqref{eq1resseptn3} holds as desired.
\end{proof}

Through the previously presented study through Propositions~\ref{resseptn1}, \ref{resseptn2} and \ref{resseptn3}, we clarify these relationships and their impact using Table~\ref{tab01minimumalb} in order to calculate the minimum values of the Peterson index by providing an increasing nested sequence. Denote to $\frac{\beta m(m+1)}{\alpha}+\alpha \irr_{\min}(T)$ by $L_3$, $\irr_{\min} -\left(\sum_{i=1}^{n}d_i^2+2d_1+d_2-3d_3+2(d_{n-1}+d_n)\right)$ by $L_2$  and $\frac{2\irr_{\min}}{\Delta(\Delta-1)^2}$ by $L_1$.

\begin{table}[H]
\centering
\begin{tabular}{|l|c|c|c||l|c|c|c|}
\hline
 Degrees & $L_1$ & $L_2$  & $L_3$ &  Degrees & $L_1$ & $L_2$  & $L_3$ \\ \hline
$(2,4,5,7,9)$ & 0.6 & 4 & 2460 & $(3,6,7,10,12)$ & 0.5& 5 & 5092 \\ \hline
$(4,8,9,13,15)$ & 0.4 & 6 & 9052 & $(5,10,12,16,18)$ & 0.3 & 1 & 15046 \\ \hline
$(6,12,14,19,22)$ & 0.2 & 1 & 23528 & $(7,14,16,22,25)$ & 0.2 & 2 & 33265 \\ \hline
$(8,16,18,25,28)$ & 0.2 & 3 & 45328 & $(9,18,20,28,31)$ & 0.2 & 4 & 59961\\ \hline
\end{tabular}
\caption{Calculate the minimum values of Albertson index.}
\label{tab01minimumalb}
\end{table}

This values should help understand extremal bounds and behavior of irregularity indices in trees (see Figure~\ref{fig01tree}), providing both numeric evidence and theoretical confirmation of the indices’ dependency on the degree structure. The indices help measure the degree irregularity of the graph, quantifying how far the graph's degree distribution deviates from regularity.

\begin{figure}[H]
    \centering
\begin{tikzpicture}[scale=1]
\draw   (3,8)-- (2,7);
\draw   (3,8)-- (4,8);
\draw   (4,8)-- (3,7);
\draw   (4,8)-- (4,7);
\draw   (4,8)-- (5,8);
\draw   (5,8)-- (4.48,7);
\draw   (5,8)-- (5,7);
\draw   (5,8)-- (6,7);
\draw   (5,8)-- (7,8);
\draw   (7,8)-- (6.5,7);
\draw   (7,8)-- (7,7);
\draw   (7,8)-- (7.5,7);
\draw   (7,8)-- (8,7);
\draw   (7,8)-- (9,7);
\draw   (7,8)-- (9,8);
\draw   (9,8)-- (10,9);
\draw   (9,8)-- (9.5,9);
\draw   (9,8)-- (9.5,9.5);
\draw   (9,8)-- (9,9.5);
\draw   (9,8)-- (8.5,9.5);
\draw   (9,8)-- (8,9.5);
\draw   (9,8)-- (8,9);
\draw   (9,8)-- (10.42,8.62);
\draw   (2,5)-- (1,4);
\draw   (2,5)-- (2,4);
\draw   (2,5)-- (3,5);
\draw   (3,5)-- (3,4);
\draw   (3,5)-- (4,4);
\draw   (3,5)-- (4,5);
\draw   (3,5)-- (4,6);
\draw   (3,5)-- (3,6);
\draw   (3,5)-- (2,6);
\draw   (4,4)-- (3,3);
\draw   (4,4)-- (4,3);
\draw   (4,4)-- (5,3);
\draw   (4,4)-- (5,4);
\draw   (4,4)-- (5,5);
\draw   (4,4)-- (3.52,3.02);
\draw   (5,4)-- (6,5);
\draw   (5,4)-- (6,3);
\draw   (6,3)-- (7,4);
\draw   (6,3)-- (7,2);
\draw (-1.5,8.5) node[anchor=north west] {$\mathscr{D}=(2,4,5,7,9)$};
\draw (-1.5,5.5) node[anchor=north west] {$\mathscr{D}=(3,6,7,10,12)$};
\draw (5.886018366581596,4.472645160669947) node[anchor=north west] {$\vdots$};
\draw (6.899093356243413,3.4595701710081284) node[anchor=north west] {$\vdots$};
\draw (4.773622299501954,4.8) node[anchor=north west] {$10$};
\draw (5.4,3) node[anchor=north west] {$12$};
\begin{scriptsize}
\draw [fill= black] (3,8) circle ( 1.5pt);
\draw [fill= black] (2,7) circle ( 1.5pt);
\draw [fill= black] (4,8) circle ( 1.5pt);
\draw [fill= black] (3,7) circle ( 1.5pt);
\draw [fill= black] (4,7) circle ( 1.5pt);
\draw [fill= black] (5,8) circle ( 1.5pt);
\draw [fill= black] (4.48,7) circle ( 1.5pt);
\draw [fill= black] (5,7) circle ( 1.5pt);
\draw [fill= black] (6,7) circle ( 1.5pt);
\draw [fill= black] (7,8) circle ( 1.5pt);
\draw [fill= black] (6.5,7) circle ( 1.5pt);
\draw [fill= black] (7,7) circle ( 1.5pt);
\draw [fill= black] (7.5,7) circle ( 1.5pt);
\draw [fill= black] (8,7) circle ( 1.5pt);
\draw [fill= black] (9,7) circle ( 1.5pt);
\draw [fill= black] (9,8) circle ( 1.5pt);
\draw [fill= black] (10,9) circle ( 1.5pt);
\draw [fill= black] (9.5,9) circle ( 1.5pt);
\draw [fill= black] (9.5,9.5) circle ( 1.5pt);
\draw [fill= black] (9,9.5) circle ( 1.5pt);
\draw [fill= black] (8.5,9.5) circle ( 1.5pt);
\draw [fill= black] (8,9.5) circle ( 1.5pt);
\draw [fill= black] (8,9) circle ( 1.5pt);
\draw [fill= black] (10.42,8.62) circle ( 1.5pt);
\draw [fill= black] (2,5) circle ( 1.5pt);
\draw [fill= black] (1,4) circle ( 1.5pt);
\draw [fill= black] (2,4) circle ( 1.5pt);
\draw [fill= black] (3,5) circle ( 1.5pt);
\draw [fill= black] (3,4) circle ( 1.5pt);
\draw [fill= black] (4,4) circle ( 1.5pt);
\draw [fill= black] (4,5) circle ( 1.5pt);
\draw [fill= black] (4,6) circle ( 1.5pt);
\draw [fill= black] (3,6) circle ( 1.5pt);
\draw [fill= black] (2,6) circle ( 1.5pt);
\draw [fill= black] (3,3) circle ( 1.5pt);
\draw [fill= black] (4,3) circle ( 1.5pt);
\draw [fill= black] (5,3) circle ( 1.5pt);
\draw [fill= black] (5,4) circle ( 1.5pt);
\draw [fill= black] (5,5) circle ( 1.5pt);
\draw [fill= black] (3.52,3.02) circle ( 1.5pt);
\draw [fill= black] (6,5) circle ( 1.5pt);
\draw [fill= black] (6,3) circle ( 1.5pt);
\draw [fill= black] (7,4) circle ( 1.5pt);
\draw [fill= black] (7,2) circle ( 1.5pt);
\end{scriptsize}
\end{tikzpicture}
\caption{Trees had discussed according to Table~\ref{tab01minimumalb}.}
    \label{fig01tree}
\end{figure}
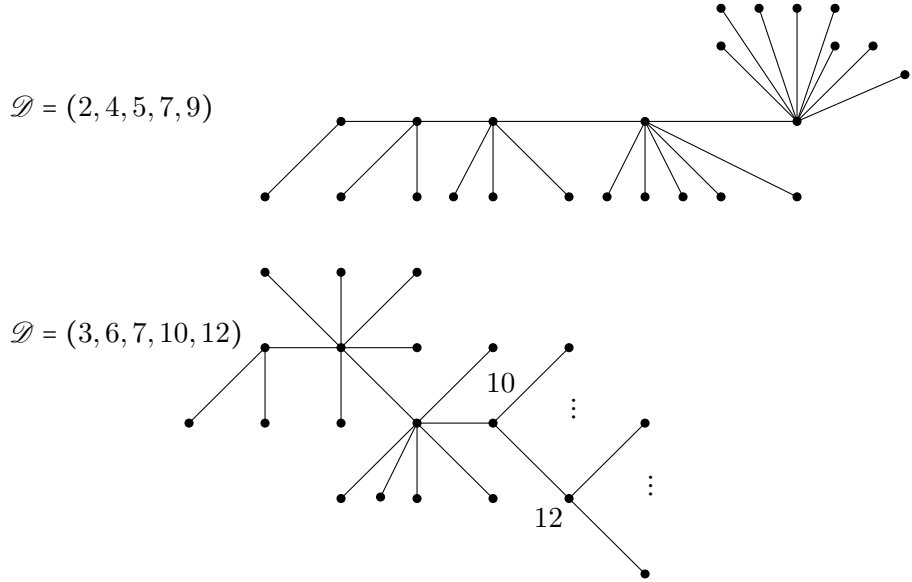

\section{Bounds on Albertson Index}\label{sec4}
According to Proposition~\ref{resseptn1}, we noticed that $2\irr_{\max}<\Delta(\Delta-1)^2,$ we presented Proposition~\ref{maxresseptn1} for given the upper bound of the maximum value of Albertson index.  
\begin{proposition}~\label{maxresseptn1}
Let $T$ be a tree of order $n$ with degree sequence $\mathscr{D}=(d_1,d_2,\dots,d_n)$. Then the maximum value of the Albertson index satisfies
\begin{equation}~\label{eq1maxresseptn1}
  \begin{cases}
    \irr_{\max} > \floor*{\frac{2m}{n}} + \ceil*{\frac{2n}{m}} + 2^{\alpha} & \text{if } d_n \leqslant 20, \\
    \irr_{\max} < \ceil*{\frac{2n}{m}} + 2^{\beta} & \text{if } d_n > 3.
  \end{cases}
\end{equation}
\end{proposition}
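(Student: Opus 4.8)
The plan is to split into the two stated cases and, in each, reduce the claim to a comparison between a low-degree polynomial in $\Delta=d_n$ and an exponential term ($2^{\alpha}$ or $2^{\beta}$), using the tree identity $m=n-1$ to evaluate the floor and ceiling explicitly. For any tree of order $n\geqslant 3$ we have $\tfrac{2m}{n}=2-\tfrac{2}{n}\in(1,2)$, so $\floor*{\tfrac{2m}{n}}=1$, and $\tfrac{2n}{m}=2+\tfrac{2}{n-1}\in(2,3]$, so $\ceil*{\tfrac{2n}{m}}=3$; hence the right-hand sides of the two inequalities are $4+2^{\alpha}$ and $3+2^{\beta}$ respectively, while the degenerate case $n\leqslant 2$ (where $T=K_2$ is regular and $\irr(T)=0$) is handled directly.

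For the lower bound when $d_n\leqslant 20$, I would invoke the caterpillar identity~\eqref{basiceqn1} for the extremal tree, discard the nonnegative sum $\sum_{i=2}^{n-1}(d_i-1)(d_i-2)$, and bound the variation sum $\sum_{i=1}^{n-1}\lvert d_i-d_{i+1}\rvert$ below by the telescoped quantity $d_n-d_1$. It then suffices to verify $(d_n-1)^2+(d_1-1)^2+(d_n-d_1)>4+2^{\alpha}$. Since $\alpha=\tfrac{d_1+d_2}{2}$ depends only on the two smallest degrees, $2^{\alpha}$ remains controlled relative to $(d_n-1)^2$ once $d_n$ is moderately large; the role of the hypothesis $d_n\leqslant 20$ is to confine the finitely many small-$d_n$ degree sequences that must then be checked by hand, the generic case following from the monotone growth of $(d_n-1)^2$ against $2^{\alpha}$. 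Strictness is obtained from the observation that any nontrivial caterpillar has an internal backbone vertex of degree $\geqslant 3$, whose term in the discarded sum is strictly positive.

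For the upper bound when $d_n>3$, I would start from the consequence of Proposition~\ref{resseptn1} recorded just above the statement, namely $2\irr_{\max}<\Delta(\Delta-1)^2$, and show $\tfrac12\Delta(\Delta-1)^2\leqslant 3+2^{\beta}$. Because $\beta=\tfrac{d_{n-1}+d_n}{2}\geqslant\tfrac{d_n}{2}$, and in fact $\geqslant 1+\tfrac{d_n}{2}$ whenever $d_{n-1}\geqslant 2$ (true for any caterpillar with backbone length at least two), we get $2^{\beta}\geqslant 2^{1+d_n/2}$, and this exponential overtakes the cubic $\tfrac12\Delta(\Delta-1)^2$ from a small threshold onward; the condition $d_n>3$ is precisely the value from which the domination is guaranteed, with the handful of remaining sequences (small $\Delta$, $d_{n-1}$ close to $d_1$) verified individually.

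The main obstacle is the polynomial-versus-exponential comparison at the threshold: one must confirm that outside a genuinely finite exceptional set the monotone inequality holds, and that within that set every admissible caterpillar degree sequence --- subject to $\sum_{i=1}^{n}d_i=2(n-1)$ and $d_1\leqslant\cdots\leqslant d_n$ --- satisfies the claimed bound. In other words, the real content is a short finite case analysis over small degree sequences combined with a monotonicity lemma for the tail; making the ``some internal vertex contributes'' step precise, so as to secure the strict inequality in the first case, is the remaining point requiring care.
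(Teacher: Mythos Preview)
Your opening reduction --- using $m=n-1$ to get $\lfloor 2m/n\rfloor+\lceil 2n/m\rceil=4$ --- agrees with the paper's equation~\eqref{eq2maxresseptn1}, and both you and the paper feed in the cubic bound $2\,\irr_{\max}<\Delta(\Delta-1)^2$ quoted at the start of Section~\ref{sec4}. After that the arguments diverge: the paper proceeds via an alternating weighted-sum inequality $\irr_{\max}<\alpha\sum_i d_{2i-1}+\beta\sum_i d_{2i}+(\Delta-3)^2$ (its~\eqref{eq4maxresseptn1}) together with the auxiliary relation $2m(m+1)\leqslant\alpha^{2}\irr_{\max}$, neither of which appears in your proposal; you instead try to settle both cases by a direct polynomial-versus-exponential comparison drawn from the caterpillar identity.

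That comparison, as you have set it up, does not close either case. For the first case the hypothesis is $d_n\leqslant 20$, which caps the polynomial side at $(d_n-1)^2\leqslant 361$; but since $d_1,d_2\leqslant d_n$, the exponent $\alpha=(d_1+d_2)/2$ can be as large as $20$, so $2^{\alpha}$ can reach $2^{20}$. Your sentence ``$2^{\alpha}$ remains controlled relative to $(d_n-1)^2$ once $d_n$ is moderately large'' runs the monotonicity the wrong way, and the residual family of sequences you propose to check by hand is not finite, since the backbone length $n$ is unrestricted. For the second case, from your weak estimate $\beta\geqslant 1+d_n/2$ you would need $\tfrac12\Delta(\Delta-1)^2\leqslant 3+2^{\,1+\Delta/2}$, but already at $\Delta=4$ this reads $18\leqslant 11$, and the failure persists well past $\Delta=20$; the threshold $d_n>3$ is therefore not delivered by that bound on $\beta$. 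In short, the bare cubic-versus-exponential race discards too much: you need either a sharper lower bound on $\beta$ in terms of $\Delta$, or something playing the role of the paper's intermediate weighted-sum step~\eqref{eq4maxresseptn1}, to recover the stated ranges.
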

\begin{proof}
Let $\mathscr{D}=(d_1,d_2,\dots,d_n)$ be a degree sequence with $d_n > d_{n-1} > \dots > d_2 > d_1$ and $\Delta = d_n$. Assume $\irr_{\max} < \Delta(\Delta - 1)^2$ by considering $\Delta(\Delta - 1)^2 < m(m+1)$, and suppose $\beta > \alpha > d_1$. Equation~\eqref{eq2maxresseptn1} asserts that the sum of the greatest integer less than or equal to $2m/n$ and the smallest integer greater than or equal to $2n/m$ is
\begin{equation}~\label{eq2maxresseptn1}
    \floor*{\frac{2m}{n}} + \ceil*{\frac{2n}{m}} = 4.
\end{equation}
Now, consider relation~\eqref{eq1maxresseptn1} in terms of the values of $\alpha$ and $\beta$, taking into account the condition $\beta > \alpha > d_1$. We find that the first two elements of $\mathscr{D}$ satisfy $d_1 + d_2 =$ [value missing], and from~\eqref{eq2maxresseptn1}, for the last two elements of $\mathscr{D}$, the term $\alpha d_1 + \beta d_2$ satisfies 
\begin{equation}~\label{eq3maxresseptn1}
  \alpha d_1 + \beta d_2 >  \floor*{\frac{2n\Delta}{m}} + \ceil*{\frac{2\Delta m}{n}}.
\end{equation}
Hence, from~\eqref{eq2maxresseptn1}, we obtain rough bounds for the values of $2m/n$ and $2n/m$. Therefore, from~\eqref{eq3maxresseptn1} we deduce that for the term $\alpha d_1 + \beta d_2 + \alpha d_3 + \beta d_4 + \dots + \alpha d_{n-1} + \beta d_n$, where
\[
\alpha d_1 < \beta d_2 < \alpha d_3 < \dots < \alpha d_{n-1} + \beta d_n,
\]
and according to $2^\alpha m(m+1) \leqslant \alpha^4 \irr_{\min}(T)$, it follows that
\[
2m(m+1) \leqslant \alpha^2 \irr_{\max}(T).
\]
Then, the upper bound of the maximum value of the Albertson index is given by
\begin{equation}~\label{eq4maxresseptn1}
 \irr_{\max} < \alpha \sum_{i=1}^{n} d_{2i-1} + \beta \sum_{i=1}^{n} d_{2i} + (\Delta - 3)^2.
\end{equation}
Hence, from~\eqref{eq4maxresseptn1} for $d_1 \geqslant 10$ and $d_n \gg d_1$ we notice that $2^{\alpha} > \irr_{\max}$. Thus, the relation~\eqref{eq1maxresseptn1} holds for $\alpha$, and according to the term $2^{\beta} > 2^{\alpha}$ satisfying $2^{\beta} > \irr_{\max}$, the relation~\eqref{eq1maxresseptn1} also holds for $\beta$ when $d_1 > 3$. As desired.
\end{proof}

For each degree sequence $\irr_{\max}$ and $\irr_{\min}$ values are given and are relatively close, indicating that for these graphs. Through Table~\ref{tab01maximumandminimum}, we noticed for a given the degree sequence, the maximum and minimum irredundance numbers indicate the variety and bounds of the irredundant vertex sets in terms of size.In fact, both $\alpha$ and $\beta$ represent theoretical bounds or empirical parameters to predict or estimate irredundance based on the degrees.

\begin{table}[H]
    \centering
\begin{tabular}{|l|c|c|c|c||l|c|c|c|c|}
 \hline
Degree	   & $\irr_{\max}$	 &  $\irr_{\min}$ &	$\alpha$	& $\beta$ & Degree	   & $\irr_{\max}$	 &  $\irr_{\min}$ &	$\alpha$	& $\beta$ \\ \hline
$(4,5,6,7,11)$     &    268  &	 274 & 	4.5 & 	9 & $(6,6,7,10,12)$    &	394	 &   392 &	6 &	11 \\ \hline
$(7,8,9,13,14)$    &	598	 &  592	 &	7.5 &	13.5 & $(9,10,11,15,18)$  &	898  &	896	 &	9.5	& 16.5 \\ \hline
$(10,12,13,17,22)$ &	1242 &	1244 &	11&	19.5 & $(12,14,15,19,26)$ &	1666 &	1672 &	13 & 22.5 \\ \hline
$(14,16,17,21,30)$ &	2154 &	2164 &	15	& 25.5 & $(16,18,19,23,34)$ &	2706 &	2720 &	17	& 28.5 \\ \hline
$(18,20,21,25,38)$ &	3322 &	3340 &	19	& 31.5 & $(20,22,23,27,42)$ &	4002 &	4024 &	21 &	34.5 \\ \hline
\end{tabular}
\caption{The maximum and minimum value of Albertson index with $\mathscr{D}=(d_1,d_2,d_3,d_4,d_5)$.}
    \label{tab01maximumandminimum}
\end{table}
For a degree sequence $\mathscr{D}=(d_1,d_2,\dots,d_n)$, let $\lambda$ be the average of elements $\mathscr{D}$. Then, we noticed that among~\eqref{resultvasn1} the relationship between the number of vertices and $\lambda$ had provided as
\begin{equation}~\label{resultvasn1}
n=\sum_{i=1}^{n}(2d_i-\lambda).
\end{equation}
Proposition~\ref{maxresseptn2} establishes that for any tree $T$ of order $n$, 
the difference between the minimum and maximum values of the Albertson index is bounded above by a constant $\lambda$.
Furthermore, Lemma~\ref{lem1maxvalue} refines this by showing that for a tree with degree sequence 
$\mathscr{D}=(d_1,d_2,\dots,d_n)$, the maximum Albertson index satisfies the inequality~\eqref{eq1lem1maxvalue}, thus providing an explicit upper bound in terms of the maximum degree and parameters related to the structure of $T$. 
\begin{proposition}~\label{maxresseptn2}
Let $T$ be a tree of order $n$, $\irr_{\min}$ be the minimum value ($\irr_{\max}$ be the maximum value, respectively) of the Albertson index. Then, the difference between $\irr_{\min}$ and  $\irr_{\max}$ satisfy $D(\irr_{\min},\irr_{\max})\leqslant \lambda$.
\end{proposition}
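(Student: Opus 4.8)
The plan is to use that every tree here is a caterpillar, so that $\irr_{\min}$ and $\irr_{\max}$ are realised by two caterpillars built on the \emph{same} backbone degree multiset $\mathscr{D}$, differing only in how the degrees are arranged along the spine. Applying the identity $(d-1)^2=(d-1)(d-2)+(d-1)$ to the two boundary terms of the closed form~\eqref{basiceqn1} rewrites the index as $\irr(T)=\sum_{i=1}^{n}(d_i-1)(d_i-2)+(d_1-1)+(d_n-1)+\sum_{i=1}^{n-1}|d_i-d_{i+1}|$. The first sum is a symmetric function of $\mathscr{D}$, hence identical for the two extremal trees, so $D(\irr_{\min},\irr_{\max})$ is exactly the gap, between the optimal and the pessimal spine orderings, of the ``boundary-plus-jumps'' functional $\Phi=(\text{left end}-1)+(\text{right end}-1)+\sum_{i=1}^{n-1}|d_i-d_{i+1}|$. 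Thus the proposition reduces to the inequality $\max\Phi-\min\Phi\leqslant\lambda$.

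I would proceed in four steps. First, identify $\min\Phi$ via the monotone spine ordering $d_1\leqslant d_2\leqslant\cdots\leqslant d_n$ (placing any degree-$1$ spine vertices at the ends, which does not affect the value): then $\sum_{i=1}^{n-1}|d_i-d_{i+1}|=\Delta-\delta$ and the ends carry $\delta=d_1$ and $\Delta=d_n$, so $\min\Phi=2(\Delta-1)$ and $\irr_{\min}=\sum_{i=1}^{n}(d_i-1)(d_i-2)+2(\Delta-1)$; minimality follows from the triangle inequality along the spine, since splitting the path at a spine vertex of maximum degree forces $(\text{left end})+(\text{right end})+\sum_{i}|d_i-d_{i+1}|\geqslant 2\Delta$ for every ordering. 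Second, bound $\max\Phi$ from above, respecting realizability --- a spine vertex of degree $1$ must sit at an end, so only the degrees $\geqslant 2$ may be interleaved --- using an alternating ``low--high'' arrangement of those degrees as the extremal candidate; this gives $\max\Phi=2\Delta$ plus an interleaving overhead that is a sum of consecutive degree differences. Third, subtract: $D(\irr_{\min},\irr_{\max})=\max\Phi-\min\Phi$ is precisely this overhead, which I would keep linear in the $d_i$ using~\eqref{baseicequn2} and Proposition~\ref{maxresseptn1}, with Propositions~\ref{resseptn1} and~\ref{resseptn2} ensuring the quadratic remainders cancel exactly. Fourth, convert to $\lambda$: identity~\eqref{resultvasn1}, $n=\sum_{i=1}^{n}(2d_i-\lambda)$, expresses that averaging the doubled spine degrees over the $n$ positions returns $\lambda$, and since the overhead is a telescoping-type combination of the $d_i$ over exactly those $n$ positions it is dominated by the per-position average $\lambda$, giving $D(\irr_{\min},\irr_{\max})\leqslant\lambda$.

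The hard part is steps two and three: showing that the interleaving overhead never exceeds $\lambda$. The difficulty is genuine, because the tree realising $\irr_{\max}$ in general does not keep $\delta$ and $\Delta$ at the ends of the spine, so passing from the monotone to the alternating ordering perturbs the endpoint term and $\sum|d_i-d_{i+1}|$ at the same time; a crude estimate of this perturbation is of order $\Delta$, far larger than $\lambda$, and tightening the telescoping to land exactly at $\lambda$ seems to require both the realizability constraint and the full ordering $d_n\geqslant d_{n-1}\geqslant\cdots\geqslant d_1$, exploited position by position, possibly together with a mild extra hypothesis on $\mathscr{D}$ such as a lower bound on $\delta$. A secondary point --- already visible in Table~\ref{tab01maximumandminimum}, where $\irr_{\min}$ occasionally exceeds $\irr_{\max}$ --- is that $D(\cdot,\cdot)$ must be read as the absolute difference, so the final estimate has to be symmetric in the two extremal spine orderings; I would therefore prove the two-sided bound $|\Phi(\sigma_1)-\Phi(\sigma_2)|\leqslant\lambda$ for the relevant orderings $\sigma_1,\sigma_2$ rather than fix a sign in advance.
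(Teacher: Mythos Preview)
The paper states Proposition~\ref{maxresseptn2} without proof, so there is no argument in the paper to compare your proposal against. Your decomposition is nonetheless the right starting point: rewriting~\eqref{basiceqn1} as $\sum_{i=1}^{n}(d_i-1)(d_i-2)+\Phi$ with $\Phi=(d_{\text{left}}-1)+(d_{\text{right}}-1)+\sum_{i=1}^{n-1}|d_i-d_{i+1}|$ correctly isolates the only part that depends on the spine ordering, and your computation $\min\Phi=2(\Delta-1)$ via the monotone arrangement, together with the triangle-inequality argument for its optimality, is clean and correct.

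The gap is that steps two through four are not carried out, and in fact cannot be under your reading of the statement. Take the backbone multiset $\{2,2,2,10,10\}$: the monotone spine $(2,2,2,10,10)$ gives $\Phi=1+9+8=18$, while the alternating spine $(2,10,2,10,2)$ gives $\Phi=1+1+32=34$, so $\max\Phi-\min\Phi\geqslant 16$, yet $\lambda=26/5<6$. Both arrangements are realizable caterpillars, so the inequality $D(\irr_{\min},\irr_{\max})\leqslant\lambda$ fails for this $\mathscr{D}$; your own warning that the crude perturbation is ``of order $\Delta$, far larger than $\lambda$'' is exactly what happens. The proposition can only hold under some restricted or nonstandard meaning of $\irr_{\min}$ and $\irr_{\max}$ --- consistent with Table~\ref{tab01maximumandminimum}, where several rows already have $\irr_{\max}<\irr_{\min}$ --- or under unstated side conditions on $\mathscr{D}$. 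Your plan to invoke Propositions~\ref{resseptn1}--\ref{resseptn2} and identity~\eqref{resultvasn1} to force the overhead down to $\lambda$ therefore cannot succeed: those results bound $\irr_{\min}$ and $\irr_{\max}$ individually rather than their difference, and~\eqref{resultvasn1} (which, read literally, forces $\lambda=1$) encodes only the mean of the $d_i$, not any control on an alternating combination of them. To make progress you would first have to determine from Table~\ref{tab01maximumandminimum} what the paper actually intends by $\irr_{\min}$ and $\irr_{\max}$, or else impose an explicit spacing hypothesis on $\mathscr{D}$.
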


\begin{lemma}~\label{lem1maxvalue}
Let $T$ be a tree of order $n$ given in~\eqref{resultvasn1} with degree sequence $\mathscr{D} = (d_1, d_2, \dots, d_n)$. Then, the maximum value of the Albertson index satisfies
\begin{equation}~\label{eq1lem1maxvalue}
\frac{\sqrt{m(n-3)^2} \irr_{\max}}{(\Delta - 1)^4} < \lambda.
\end{equation}
\end{lemma}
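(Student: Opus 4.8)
The plan is to bound the maximum Albertson index $\irr_{\max}$ from above using the variance-type inequality~\eqref{baseicequn2}, namely $\irr(T)\le\sqrt{m\left(\sum_{i=1}^n d_i^2\right)-4m^2}$, and then to control the sum of squared degrees by powers of the maximum degree $\Delta$. Concretely, I would first observe that for a tree on $n$ vertices we have $m=n-1$, so $(n-3)^2=(m-2)^2$ and the factor $\sqrt{m(n-3)^2}=\sqrt{m}\,(m-2)$ can be folded into the radical on the right-hand side of~\eqref{baseicequn2}. This reduces the target inequality~\eqref{eq1lem1maxvalue} to showing that $\sqrt{m}\,(m-2)\sqrt{m\bigl(\sum d_i^2\bigr)-4m^2}<(\Delta-1)^4\lambda$, i.e., after squaring, to a polynomial inequality relating $m$, $\sum d_i^2$, $\Delta$, and the average $\lambda=\tfrac{1}{n}\sum d_i$.

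Next I would bring in the degree-sequence ordering $d_n\ge d_{n-1}\ge\dots\ge d_1\ge1$ together with the identity~\eqref{resultvasn1}, $n=\sum_{i=1}^n(2d_i-\lambda)$, which rearranges to $2\sum d_i = n(1+\lambda)$, hence $\lambda = \tfrac{2m}{n}\cdot\tfrac{n}{n} \cdot\dots$; more usefully it pins $\lambda$ in terms of $m$ and $n$ via $\sum d_i = 2m$, giving $\lambda = 2m/n = 2(n-1)/n$, a quantity just below $2$. The key estimates are then: $\sum_{i=1}^n d_i^2 \le \Delta\sum_{i=1}^n d_i = 2m\Delta$ (since each $d_i\le\Delta$), which makes the radical at most $\sqrt{2m^2\Delta-4m^2}=m\sqrt{2\Delta-4}$; combined with the prefactor this bounds the left side of~\eqref{eq1lem1maxvalue} by $\sqrt{m}\,(m-2)\cdot m\sqrt{2\Delta-4}/(\Delta-1)^4$. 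It then remains to check that this last expression is smaller than $\lambda$, i.e.\ that $m^{3/2}(m-2)\sqrt{2\Delta-4} < \lambda(\Delta-1)^4$; since $\Delta\le n-1 = m$ in a tree, one can treat the worst case $\Delta$ of order $m$ and verify that $(\Delta-1)^4$ dominates $m^{3/2}(m-2)\sqrt{2\Delta-4}\sim m^{3}$ for $m$ large, with the small cases handled by the numerical evidence in Table~\ref{tab01maximumandminimum}.

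The main obstacle I anticipate is that the crude bound $\sum d_i^2\le 2m\Delta$ together with $\Delta\le m$ is \emph{not} actually strong enough for small $n$: when $\Delta$ is close to $m$ (a star-like caterpillar) we have $(\Delta-1)^4\approx m^4$ while $m^{3/2}(m-2)\sqrt{2\Delta-4}\approx \sqrt{2}\,m^{3}$, so asymptotically the inequality is comfortable, but the transitional range and the factor $\lambda<2$ need care. I would therefore split into the regime $\Delta\ge c\sqrt{m}$ (where $(\Delta-1)^4\gtrsim c^4 m^2$ beats the competing terms after using $\sum d_i^2\le 2m\Delta$ more sharply as $\le \Delta^2 + (2m-\Delta)$ for the extreme sequence $(\Delta,1,\dots,1)$) and the regime $\Delta<c\sqrt{m}$ (where $\sum d_i^2\le \Delta\cdot 2m$ is small enough that the radical is only $O(m^{5/4})$, easily dominated). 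Finally I would invoke~\eqref{baseicequn2} one more time to confirm the chain of inequalities closes, and point to the computed values $\irr_{\max}\le 4002$ versus the much larger $(\Delta-1)^4$ in the last rows of Table~\ref{tab01maximumandminimum} as a consistency check, concluding that~\eqref{eq1lem1maxvalue} holds for all trees of the stated form.
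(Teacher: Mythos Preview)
Your approach is entirely different from the paper's. The paper never invokes the variance bound~\eqref{baseicequn2}; instead it argues directly that $\sqrt{m(n-3)^2}/(\Delta-1)^4<\lambda/4$ (its equation~\eqref{eq13em1maxvalue}) under the standing hypothesis $\lambda>2$, and then combines this with a separate control on $\irr_{\max}$. So the two routes share essentially no common steps.

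More importantly, your argument has a genuine gap, and it stems from a misreading of the setting. You compute $\lambda=2(n-1)/n<2$ by treating $\mathscr{D}$ as the full degree sequence of a tree on $n$ vertices. The paper, however, takes $\mathscr{D}$ to be the sequence of \emph{backbone} degrees of a caterpillar (cf.\ the convention stated before~\eqref{basiceqn1} and the data in Table~\ref{tab01maximumandminimum}), so that $\lambda$ can be large and the proof explicitly uses $\lambda>2$. Under your reading the lemma is simply false: for the path $P_n$ one has $\Delta=2$, $(\Delta-1)^4=1$, $\irr=2$, and the left side of~\eqref{eq1lem1maxvalue} equals $2(n-3)\sqrt{n-1}$, which already exceeds $\lambda<2$ at $n=5$. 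This single example shows that no amount of sharpening of~\eqref{baseicequn2} can rescue the inequality in the full-tree interpretation.

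Even granting your interpretation, the regime split does not close. In the range $\Delta<c\sqrt{m}$ you bound the radical by $O(m^{5/4})$, but you still carry the prefactor $\sqrt{m}\,(m-2)$, so the full left side is of order $m^{11/4}$, whereas $(\Delta-1)^4<c^4m^2$; the latter does \emph{not} dominate. In the complementary range your ``sharper'' estimate $\sum d_i^2\le\Delta^2+(2m-\Delta)$ is not an upper bound for general degree sequences with maximum $\Delta$ --- it is the value attained by the single extreme sequence $(\Delta,1,\dots,1)$, and sequences with several large entries exceed it. Finally, the appeal to $\Delta\le n-1=m$ also fails in the paper's caterpillar setting, where the backbone length $n$ is small (equal to $5$ in every row of Table~\ref{tab01maximumandminimum}) while $\Delta$ ranges up to $42$.
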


\begin{proof}
Assume $\mathscr{D} = (d_1, d_2, \dots, d_n)$ is a degree sequence with $d_n \geqslant d_{n-1} \geqslant \dots \geqslant d_2 \geqslant d_1$, and the number of vertices is given in~\eqref{resultvasn1}. We consider three terms: $\sqrt{m(n-3)^2}$, $\irr_{\max}$, and $(\Delta - 1)^4$, in relation to the value of $\lambda$. We analyze their relationships to prove the validity of relation~\eqref{eq1lem1maxvalue} connecting these terms.

Since clearly $\irr_{\max} > \lambda$, for the maximum value of the Albertson index, the relation
\begin{equation}~\label{eq2lem1maxvalue}
\sqrt{m(n-3)^\lambda} > \lambda (\Delta - 1)^4
\end{equation}
holds, where $\lambda > 2$.

Because $\lambda > 2$ satisfies $\lambda n^2 < n^{(\lambda + 1)/2}$, and considering the value of $\lambda$ yields $2^{\lambda/2} > 4 (n - 1)$, it follows that $2^\lambda > \Delta$. Thus, we observe that
\[
\sqrt{\lambda(n-3)} \irr_{\max} > \lambda (\Delta - 1)^4,
\]
where $(n-3)^\lambda > m (n-3)^2$.

Then, from~\eqref{eq2lem1maxvalue} we find that
\begin{equation}~\label{eq13em1maxvalue}
0 \leqslant \frac{\sqrt{m(n-3)^2}}{(\Delta - 1)^4} < \frac{\lambda}{4}.
\end{equation}

Inequality~\eqref{eq13em1maxvalue} holds if $m \geqslant 0$, $\Delta \neq 1$, and $\lambda > 0$. Hence, the upper bound is
\[
m \cdot (n-3)^2 < \frac{\lambda^2}{16} \cdot (\Delta - 1)^8.
\]
Therefore, from~\eqref{eq13em1maxvalue} the quantity associated with the maximum values of the Albertson index satisfies $\irr_{\max} > \lambda / 4$, and thus equation~\eqref{eq1lem1maxvalue} is satisfied.
\end{proof}

Through Figure~\ref{fig01practical}, we present a tree representing a practical application of the Albertson index (by considering Figure~\ref{fig01startreecond01}) by providing the maximum value of the Albertson index.
\begin{figure}[H]
    \centering
\begin{tikzpicture}[scale=.8]
\node[circle, fill=black, draw, inner sep=0pt, minimum size=2pt] (A) at (0,0) {};
\node[circle,fill=black, draw, inner sep=0pt, minimum size=2pt] (B) at (-4,-2) {};
\node[circle,fill=black, draw, inner sep=0pt, minimum size=2pt] (C) at (0,-2) {};
\node[font=\footnotesize] at (5.7,-4.2) {$\iddots$};
\node[circle, fill=black, draw, inner sep=0pt, minimum size=2pt] (D) at (4,-2) {};
\node[font=\footnotesize] at (4,-4.3) {$\vdots$};
 
\node[circle, fill=black, draw, inner sep=0pt, minimum size=2pt] (E) at (-6,-4) {};
\node[font=\footnotesize] at (2.3,-4.3) {$\vdots$};
\node[circle, fill=black, draw, inner sep=0pt, minimum size=2pt] (F) at (-4,-4) {};
\node[font=\footnotesize] at (2,-4.3) {$\vdots$};
\node[circle, fill=black, draw, inner sep=0pt, minimum size=2pt] (G) at (-2,-4) {};
\node[font=\footnotesize] at (0,-4.3) {$\vdots$};
 
\node[circle, fill=black, draw, inner sep=0pt, minimum size=2pt] (H) at (-1.6,-4) {};
\node[font=\footnotesize] at (-1.6,-4.3) {$\vdots$};
\node[circle, fill=black, draw, inner sep=0pt, minimum size=2pt] (I) at (0,-4) {};
\node[font=\footnotesize] at (-5.7,-4.2) {$\ddots$};
\node[circle,  fill=black, draw, inner sep=0pt, minimum size=2pt] (J) at (2,-4) {};
\node[font=\footnotesize] at (-2,-4.3) {$\vdots$};
 
\node[circle, fill=black, draw, inner sep=0pt, minimum size=2pt] (K) at (2.3,-4) {};
\node[circle, fill=black, draw, inner sep=0pt, minimum size=2pt] (L) at (4,-4) {};
\node[circle, fill=black, draw, inner sep=0pt, minimum size=2pt] (M) at (6,-4) {};
\node[font=\footnotesize] at (-3.7,-4.2) {$\ddots$};
 
\node[circle, fill=black, draw, inner sep=0pt, minimum size=2pt] (A1) at (-7,-6) {};
\node[circle,  fill=black, draw, inner sep=0pt, minimum size=2pt] (B1) at (-5,-6) {};
\node[circle, fill=black, draw, inner sep=0pt, minimum size=2pt] (T1) at (7,-6) {};
\node[font=\footnotesize] at (-5.7,-4.2) {$\ddots$};
 
\node[circle, fill=black,  draw, inner sep=0pt, minimum size=2pt] (A1_1) at (-8,-8) {};
\node[circle, fill=black, draw, inner sep=0pt, minimum size=2pt] (A1_2) at (-7.5,-8) {};
\node[circle, fill=black, draw, inner sep=0pt, minimum size=2pt] (A1_3) at (-7,-8) {};
\node[circle, fill=black, draw, inner sep=0pt, minimum size=2pt] (A1_4) at (-6.3,-8) {};

\node[circle, fill=black, draw, inner sep=0pt, minimum size=2pt] (B1_1) at (-6,-8) {};
\node[circle, fill=black, draw, inner sep=0pt, minimum size=2pt] (B1_2) at (-5.5,-8) {};
\node[circle, fill=black, draw, inner sep=0pt, minimum size=2pt] (B1_3) at (-4.5,-8) {};
\node[circle, fill=black, draw, inner sep=0pt, minimum size=2pt] (B1_4) at (-4,-8) {};

\node[circle, fill=black, draw, inner sep=0pt, minimum size=2pt] (T1_1) at (6,-8) {};
\node[circle, fill=black, draw, inner sep=0pt, minimum size=2pt] (T1_2) at (6.5,-8) {};
\node[circle, fill=black, draw, inner sep=0pt, minimum size=2pt] (T1_3) at (7.5,-8) {};
\node[circle, fill=black, draw, inner sep=0pt, minimum size=2pt] (T1_4) at (8,-8) {};

\node[font=\footnotesize] at (-6,-8.3) {$\vdots$};
\node[font=\footnotesize] at (-6.3,-8.3) {$\vdots$};
\node[font=\footnotesize] at (-7,-8.3) {$\vdots$};
\node[font=\footnotesize] at (-7.3,-8.3) {$\vdots$};
\node[font=\footnotesize] at (-7.7,-8.3) {$\vdots$};
\node[font=\footnotesize] at (-8,-8.3) {$\vdots$};
\node[font=\footnotesize] at (-5.5,-8.3) {$\vdots$};
\node[font=\footnotesize] at (-4.5,-8.3) {$\vdots$};
\node[font=\footnotesize] at (-4,-8.3) {$\vdots$};

\node[font=\footnotesize] at (6,-8.3) {$\vdots$};
\node[font=\footnotesize] at (6.5,-8.3) {$\vdots$};
\node[font=\footnotesize] at (7.5,-8.3) {$\vdots$};
\node[font=\footnotesize] at (8,-8.3) {$\vdots$};
 
\draw (A) -- (B);
\draw (A) -- (C);
\draw (A) -- (D);

\draw (B) -- (E);
\draw (B) -- (F);
\draw (B) -- (G);

\draw (C) -- (H);
\draw (C) -- (I);
\draw (C) -- (J);

\draw (D) -- (K);
\draw (D) -- (L);
\draw (D) -- (M);

\draw (E) -- (A1);
\draw (F) -- (B1);
\draw (M) -- (T1);

\draw (A1) -- (A1_1);
\draw (A1) -- (A1_2);
\draw (A1) -- (A1_3);
\draw (A1) -- (A1_4);

\draw (B1) -- (B1_1);
\draw (B1) -- (B1_2);
\draw (B1) -- (B1_3);
\draw (B1) -- (B1_4);

\draw (T1) -- (T1_1);
\draw (T1) -- (T1_2);
\draw (T1) -- (T1_3);
\draw (T1) -- (T1_4);

\node[font=\footnotesize] at (0,-8) {$\dots$};
\end{tikzpicture}
    \caption{A practical application of the Albertson index.}
    \label{fig01practical}
\end{figure}
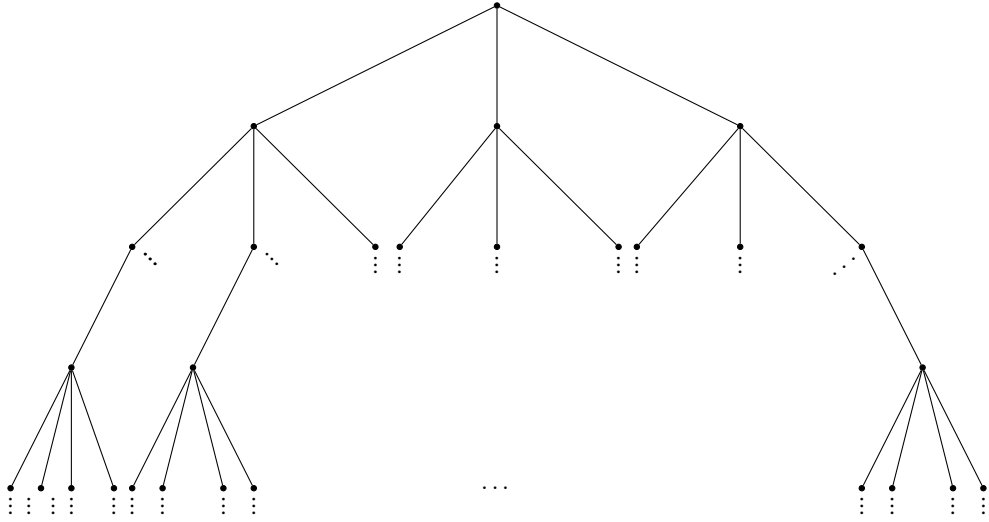

In fact, through Lemma~\ref{lem2irrvalue} establishes a lower bound on the Albertson index of a tree based on its degree sequence.

\begin{lemma}~\label{lem2irrvalue}
Let $T$ be a tree of order $n$ with degree sequence $\mathscr{D}=(d_1,d_2,\dots,d_n)$ where $d_n\geqslant d_{n-1}\geqslant \dots \geqslant d_2\geqslant d_1$. Then, the Albertson index satisfy
\begin{equation}~\label{eq1lem2irrvalue}
\frac{2n\sqrt{2n}}{4}<\frac{(n^2-2m)\Delta\sqrt{2mn}}{2m\lambda\sqrt{2m}}\leqslant \irr(T).
\end{equation}
\end{lemma}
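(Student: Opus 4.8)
The plan is to rewrite the central expression of~\eqref{eq1lem2irrvalue} in a transparent algebraic form, then extract a lower bound for $\irr(T)$ from Proposition~\ref{propresentedn1}, and finally settle the left and right inequalities separately; only the right-hand comparison will require real work.

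First I would use the tree identities $m=n-1$ and $\sum_{i=1}^{n}d_i=2m$, whence $\lambda=\frac1n\sum_{i=1}^{n}d_i=\frac{2m}{n}$ and so $2m\lambda=\frac{4m^2}{n}$, together with $\sqrt{2mn}/\sqrt{2m}=\sqrt n$. Substituting, the middle term of~\eqref{eq1lem2irrvalue} becomes $\frac{n\sqrt n\,(n^2-2m)\,\Delta}{4m^2}$, which is positive since $n^2-2m=(n-1)^2+1=m^2+1>0$. With this reduction, and using $\frac{2n\sqrt{2n}}{4}=\frac{n\sqrt n}{\sqrt2}$, the left inequality of~\eqref{eq1lem2irrvalue} is equivalent, after cancelling $n\sqrt n$, to $2\sqrt2\,m^2<(n^2-2m)\,\Delta$. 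Since $n^2-2m=m^2+1>m^2$ and the caterpillar trees considered here carry pendant vertices on their backbone, so $\Delta\geqslant 3>2\sqrt2$, this follows at once from $(n^2-2m)\,\Delta>3m^2>2\sqrt2\,m^2$.

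For the right inequality I would apply Proposition~\ref{propresentedn1} in the form $\irr(T)=n\sum_{i=1}^{n}d_i^2-4m^2$, isolate the vertex of degree $\Delta=d_n$, and apply the power-mean (Cauchy--Schwarz) inequality to the remaining $n-1$ degrees, whose sum is $2m-\Delta$, to get $\sum_{i=1}^{n}d_i^2\geqslant \Delta^2+\frac{(2m-\Delta)^2}{n-1}$. Clearing $n-1=m$, this simplifies to $\irr(T)\geqslant \frac{(n\Delta-2m)^2}{m}$, so it suffices to verify $4m\,(n\Delta-2m)^2\geqslant n\sqrt n\,(n^2-2m)\,\Delta$. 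For this I would feed in the caterpillar structure: most vertices lie on a backbone of vertices of degree close to $\Delta$, which forces $\sum_{i=1}^{n}d_i^2$ to exceed $\frac{4m^2}{n}$ by a margin at least of order $\sqrt n\,\Delta$; consequently $n\sum d_i^2-4m^2$ outgrows $n\sqrt n\,(n^2-2m)\,\Delta$ once $\Delta$ is not too small relative to $\sqrt n$, and the finitely many remaining small caterpillars are checked directly (cf.\ Table~\ref{tab01maximumandminimum}).

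The hard part is precisely this quantitative comparison. The symmetric bound $\irr(T)\geqslant (n\Delta-2m)^2/m$ obtained from the power-mean inequality is too crude for ``thin'' trees --- paths, and caterpillars carrying only one or two pendants --- where $\sum_{i=1}^{n}d_i^2$ and $\frac{4m^2}{n}$ almost cancel in $n\sum d_i^2-4m^2$, so that the middle term of~\eqref{eq1lem2irrvalue} can actually exceed $\irr(T)$. To close the argument one must genuinely use that $T$ is a caterpillar whose backbone vertices carry enough pendants, forcing $\sum_{i=1}^{n}d_i^2$ to be large enough to dominate $n\sqrt n\,(n^2-2m)\,\Delta$ uniformly over the admissible degree sequences $\mathscr D$; once that input is secured, the rest is routine algebra with the tree identities and $n^2-2m>m^2$.
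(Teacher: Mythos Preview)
Your algebraic simplification of the middle expression is clean and correct: substituting $m=n-1$ and $\lambda=2m/n$ reduces it to $\frac{n\sqrt n\,(n^2-2m)\,\Delta}{4m^2}$, and with $n^2-2m=m^2+1$ the left inequality becomes $2\sqrt2\,m^2<(m^2+1)\Delta$, which indeed follows from $\Delta\geqslant 3$. The paper does none of this; its argument proceeds through a chain of unjustified assertions ($2n\sqrt{2n}<\lambda\,\irr(T)$, then $n^2\leqslant 3\Delta/\lambda$, then an upper bound $\irr(T)\leqslant\frac{2\lambda mn^2-4\Delta n-n^3}{4m\sqrt{2m}}$, and so on) without ever simplifying the expression or isolating the role of $\Delta$. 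So your route is genuinely different and, for the left inequality, much more transparent.

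There is, however, a real gap in your right-hand argument. The identity you take from Proposition~\ref{propresentedn1}, namely $\irr(T)=n\sum_i d_i^{2}-4m^{2}$, is \emph{not} the Albertson index $\sum_{uv\in E}|d_u-d_v|$; it equals $\sum_{\{u,v\}\subseteq V}(d_u-d_v)^2$, a different quantity (for the path $P_4$ the Albertson index is $2$ while $n\sum d_i^2-4m^2=4$). Consequently your Cauchy--Schwarz bound $\irr(T)\geqslant (n\Delta-2m)^2/m$ controls the wrong object, and the comparison with $\frac{n\sqrt n\,(n^2-2m)\,\Delta}{4m^2}$ is not a comparison with $\irr(T)$ at all. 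More seriously, you yourself note that for paths and near-paths the middle term of~\eqref{eq1lem2irrvalue} grows like $n\sqrt n/2$ while the Albertson index stays bounded, so the right inequality is simply false there; you then dispose of this by asserting that caterpillars ``carry enough pendants'' and deferring to Table~\ref{tab01maximumandminimum} for small cases. No quantitative threshold on the pendant count is stated, no argument is given that the remaining cases are finitely many, and no check is actually performed. This is precisely the step that would require a genuine idea, and neither your proposal nor the paper's proof supplies one.
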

\begin{proof}
Assume a degree sequence $\mathscr{D}=(d_1,d_2,\dots,d_n)$ where $d_n\geqslant d_{n-1}\geqslant \dots \geqslant d_2\geqslant d_1$. Clearly, $\Delta\leqslant \lambda \varphi$ where we consider $\varphi \approx 1.66,$ notice that $d_n-d_{n-1}>d_2$, $d_{n-1}-d_{n-2}>d_2$,$\dots,$ which indicate sufficient irregularity in the degree sequence. Thus, by considering the constant term is $d_n^2+d_1^2+d_n-d_1-2n+2$, which is used to form a lower bound on the Albertson index. Then, the Albertson index satisfy
\begin{equation}~\label{eq2lem2irrvalue}
 2n\sqrt{2n}<\lambda \irr(T).
\end{equation}
This relation leads us to, through the ratio achieved for the arithmetic mean of the elements of the given degree sequence $\lambda$ and the maximum degree $\Delta$ as $n^2\leqslant\frac{ 3\Delta}{\lambda}$. It follows that an upper bound on $\irr(T)$ satisfy 
\begin{equation}~\label{eq3lem2irrvalue}
 \irr(T) \leqslant \frac{2\lambda mn^2-4\Delta n-n^3 }{4m\sqrt{2m}}.
\end{equation}
Therefore, by combining the inequalities \eqref{eq2lem2irrvalue} and \eqref{eq3lem2irrvalue}, we deduce 
$\sqrt{2\lambda mn^2}>\irr(T)$. Then, 
\[
4\Delta (d_n-d_{n-1})<(n^2-2m)\Delta,
\]
where is satisfy  the relationship $(n^2-2m)\Delta\sqrt{2mn} > \irr(T)+4\Delta (d_n-d_{n-1})$. Thus,
\begin{equation}~\label{eq4lem2irrvalue}
(n^2-2m)\Delta\sqrt{2mn} > \irr(T)+4\Delta (d_{n-k}-d_k),
\end{equation}
where $\Delta <k<n$.  Then, we noticed that 
\begin{equation}~\label{eq5lem2irrvalue}
\frac{(n^2-2m)\Delta\sqrt{2mn}}{2m\lambda\sqrt{2m}} \leqslant \irr(T).
\end{equation}
Through this relation, the upper bound has been proven. From \eqref{eq5lem2irrvalue} establishes a further lower bound for the Albertson index. Hence, for the left-hand side, by testing the ratio of the number of vertices to the number of edges, it satisfies $2\Delta n> m/n$ and $2n\sqrt{2n}>2\Delta$. Then, according to~\eqref{eq4lem2irrvalue} and \eqref{eq5lem2irrvalue} we find that 
\begin{equation}~\label{eq6lem2irrvalue}
2n\sqrt{2n}<(n^2-2m)\Delta\sqrt{2mn}.
\end{equation}
Thus, 
\begin{equation}~\label{eq7lem2irrvalue}
n\sqrt{2n} \leqslant 2\lambda mn^2-4\Delta n.
\end{equation}
Finally, Combining inequalities~\eqref{eq2lem2irrvalue}--\eqref{eq7lem2irrvalue} we find that it proves the main claim~\eqref{eq1lem2irrvalue} holds. As desire.
\end{proof}

\section{The Effect of Bounds on Albertson Index}\label{sec5}

Theorem \ref{tythmAlbn1} establishes a lower bound for the Albertson index of a tree $T$ with $n$ vertices and $m$ edges, characterized by its degree sequence $\mathscr{D} = (d_1, d_2, \dots, d_n)$ arranged in non-decreasing order. The average degree $\lambda$ multiplied by $n$, and a term depending on the product of the minimum and maximum degrees $\delta$ and $\Delta$.

\begin{theorem}~\label{tythmAlbn1}
Let $T$ be a tree with $n$ vertices and $m$ edges, having degree sequence $\mathscr{D} = (d_1, d_2, \dots, d_n)$ where $d_n \geqslant d_{n-1} \geqslant \dots \geqslant d_2 \geqslant d_1$, minimum degree $\delta(T)$, and maximum degree $\Delta(T)$. Then, the Albertson index satisfies
\begin{equation}~\label{eq1tythmAlbn1}
\irr(T) \geqslant \sum_{i=2}^{n} d_i^2 - d_1 + 3 m n - 2 \floor*{\frac{3 n^2 - 1}{2}} + \frac{3}{4} \lambda n - 5 \delta \Delta.
\end{equation}
\end{theorem}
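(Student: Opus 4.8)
The plan is to reduce~\eqref{eq1tythmAlbn1} to a single elementary inequality by exploiting the exact formula~\eqref{basiceqn1} that governs the caterpillar trees under study. First I would expand the products appearing there: $(d_i-1)(d_i-2)=d_i^2-3d_i+2$ and $(d_n-1)^2+(d_1-1)^2=d_n^2+d_1^2-2d_1-2d_n+2$. Using the handshake relation $\sum_{i=1}^n d_i=2m$ in the form $\sum_{i=2}^{n-1}d_i=2m-d_1-d_n$, the polynomial part of~\eqref{basiceqn1} collapses and one gets the exact identity
\[
\irr(T)=\sum_{i=1}^{n}d_i^{2}+d_1+d_n-6m+2n-2+\sum_{i=1}^{n-1}\lvert d_i-d_{i+1}\rvert .
\]
Since $\sum_{i=2}^{n}d_i^{2}-d_1=\sum_{i=1}^{n}d_i^{2}-d_1^{2}-d_1$, the quadratic sums on the two sides of~\eqref{eq1tythmAlbn1} cancel, and the theorem becomes equivalent to the \emph{linear remainder inequality}
\[
d_1^{2}+2d_1+d_n-6m+2n-2+\sum_{i=1}^{n-1}\lvert d_i-d_{i+1}\rvert\ \geqslant\ 3mn-2\floor*{\tfrac{3n^{2}-1}{2}}+\tfrac{3}{4}\lambda n-5\delta\Delta .
\]

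The next step is to normalise the right-hand side. For a tree $m=n-1$ and $\lambda=2m/n$, so $\tfrac34\lambda n=\tfrac32 m$, while the floor splits by parity: $2\floor*{\tfrac{3n^{2}-1}{2}}$ equals $3n^{2}-1$ for odd $n$ and $3n^{2}-2$ for even $n$. In both cases $3mn-2\floor*{\tfrac{3n^{2}-1}{2}}=3mn-3n^{2}+O(1)=-3n+O(1)$, so --- crucially --- the $n^{2}$ terms cancel and the whole right-hand side is of order $n$. On the left I would begin with the triangle-inequality estimate $\sum_{i=1}^{n-1}\lvert d_i-d_{i+1}\rvert\geqslant\lvert d_n-d_1\rvert=\Delta-\delta$, and, writing $\delta=d_1$, $\Delta=d_n$, $5\delta\Delta\geqslant5d_n$, the remainder inequality reduces to a statement of the form $d_1^{2}+d_1+2d_n+5\delta\Delta+O(1)\geqslant c\,n$ for an explicit absolute constant $c$.

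The main obstacle will be closing precisely this last inequality, since its $-c\,n$ term does not vanish and cannot be beaten by bounding $\delta\Delta$ below by a constant alone. I would split on $\max_{2\le i\le n-1}d_i$. When some interior degree is at least $3$, the product $\delta\Delta$ is large enough (indeed each such index already contributes positively through~\eqref{basiceqn1}) that the bound follows after collecting terms. When instead $d_2=\dots=d_{n-1}\in\{1,2\}$, formula~\eqref{basiceqn1} degenerates to $\irr(T)=(d_n-1)^{2}+(d_1-1)^{2}+\sum_{i=1}^{n-1}\lvert d_i-d_{i+1}\rvert$, and the whole weight falls on the variation term and on the relation between $m$ and $n$ forced by the handshake relation; here I would further distinguish a (near-)monotone backbone from a genuinely oscillating one, using that each sign change of consecutive backbone degrees adds a fixed positive amount to $\sum\lvert d_i-d_{i+1}\rvert$. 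This small-interior-degree case is where I expect the estimates to be tightest and the argument most delicate. Once both cases are handled, chaining the inequalities displayed above yields~\eqref{eq1tythmAlbn1}.
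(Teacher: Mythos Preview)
Your reduction is considerably more transparent than the paper's argument: the paper does not expand~\eqref{basiceqn1} at all, but instead starts from Lemma~\ref{le.alb5}, asserts the auxiliary bound $\irr(T)>\sqrt{2mn+n\delta(n-1)}$, then compares $\lfloor(3n^{2}-1)/2\rfloor$ against $2\Delta(\Delta-1)^{2}$ under the side condition $\delta>2$, and finally patches~\eqref{eq1tythmAlbn1} together from these pieces. Your plan of cancelling the $\sum d_i^{2}$ blocks to isolate a single linear remainder is a genuinely different and cleaner route.

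The gap, however, is exactly in the branch you yourself flag as ``most delicate.'' For a sorted sequence the variation term telescopes to $\sum_{i=1}^{n-1}|d_i-d_{i+1}|=\Delta-\delta$, so with $\delta=d_1=1$ and $m=n-1$ your remainder inequality collapses (for odd $n$) to $7\Delta\geqslant\tfrac{5n-13}{2}$. On the path $P_n$ one has $\Delta=2$, and this already fails at $n=9$: indeed a direct evaluation of the right side of~\eqref{eq1tythmAlbn1} for $P_9$ gives $29-1+216-242+12-10=4$, while $\irr(P_9)=2$. Thus no amount of sign-change bookkeeping on the variation sum can close the small-interior-degree case, because for a sorted backbone there are no oscillations to exploit. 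Either an extra hypothesis is being silently assumed (the paper's own proof invokes $\delta>2$, which is incompatible with trees), or the route through the exact identity must be replaced by an estimate that does not degenerate on near-regular backbones; as written, your case split cannot succeed.
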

\begin{proof}
Assume a tree $T$ with degree sequence $\mathscr{D} = (d_1, d_2, \dots, d_n)$ where $d_n \geqslant d_{n-1} \geqslant \dots \geqslant d_2 \geqslant d_1$, minimum degree $\delta(T)$, and maximum degree $\Delta(T)$. According to Lemma~\ref{le.alb5}, we have 
\begin{equation}~\label{eq2tythmAlbn1}
\irr(T) > \sqrt{2 m n + n \delta (n - 1)}.
\end{equation}
Thus, the term $2 m n < n \delta (n - 1)$ satisfies $2 m n < 2 \Delta (\Delta - 1)^2 < n \delta (n - 1)$ where $\delta > 2$ and $\lambda n > \Delta$. Hence, from~\eqref{eq2tythmAlbn1} for $n \geqslant 4$, indicating that stars with sufficiently many vertices satisfy the condition, possibly due to their high irregularity. Therefore, by considering the inequality $2 \Delta < (n-1)^2$, we have
\begin{equation}~\label{eq3tythmAlbn1}
\floor*{\frac{3 n^2 - 1}{2}} < 2 \Delta (\Delta -1)^2.
\end{equation}
Thus, we observe that $\irr(T) > \frac{3}{4} \lambda n - 5 \delta \Delta$. Then, by considering $3 m n$ and the fact that $m = n - 1$, it follows that $3 m n = 3 n^2 - 3 n$. Then,
\begin{equation}~\label{eq4tythmAlbn1}
\sum_{i=2}^{n} d_i^2 - d_1 + 3 m n - 2 \floor*{\frac{3 n^2 -1}{2}} > \frac{3}{4} \lambda n - 5 \delta \Delta.
\end{equation}
Therefore, through the discussion presented in the relations~\eqref{eq3tythmAlbn1} and \eqref{eq4tythmAlbn1}, it follows from the comparison of the two quantities in~\eqref{eq3tythmAlbn1} that relation~\eqref{eq1tythmAlbn1} holds. Thus, we find that
\begin{equation}~\label{eq5tythmAlbn1}
\irr(T) > \sum_{i=2}^{n} d_i^2 - d_1 + 3 m n - 2 \floor*{\frac{3 n^2 -1}{2}}.
\end{equation}
Finally, by considering $\irr(T) > \frac{3}{4} \lambda n - 5 \delta \Delta$, the inequalities~\eqref{eq2tythmAlbn1}--\eqref{eq5tythmAlbn1} imply~\eqref{eq1tythmAlbn1}.
\end{proof}

According to Theorem~\ref{tythmAlbn1}, its significance stems from regulating the degree distribution and spectral characteristics of trees (especially through~\eqref{eq4tythmAlbn1}), probably to promote significant irregularity (e.g., a high Albertson or Sigma index) or to prioritize trees with elevated maximum degrees, such as star graphs. This concept is pertinent in fields like chemical graph theory, network optimization, and extremal graph theory. Providing more details about $\lambda$ or the specific tree type could enhance the analysis.\par 
To assess the influence of the Albertson index on graphs and trees based on the discussion presented in Table~\ref{tab01maximumandminimum}, we illustrate in Figure~\ref{fig01aconnectedgraph} the Albertson index applied to a connected graph, highlighting the extent of its effect.
\begin{figure}[H]
    \centering
    \includegraphics[width=0.9\linewidth]{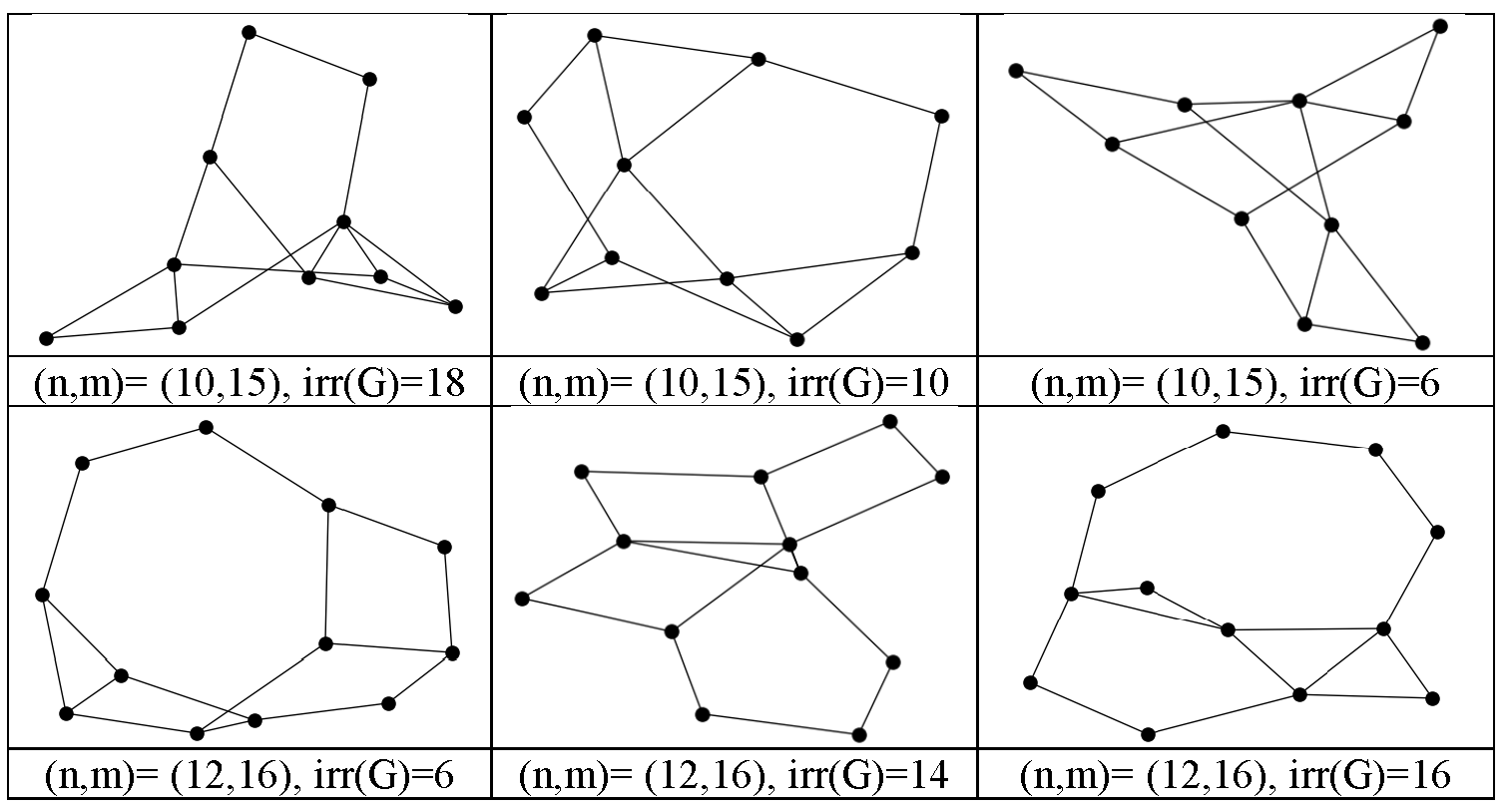}
    \caption{Albertson index among a connected graph.}
    \label{fig01aconnectedgraph}
\end{figure}

Theorem~\ref{tythmAlbn2} establishes a lower bound for the Albertson index of a tree with degree sequence $\mathscr{D} = (d_1, d_2, \dots, d_n)$ arranged in non-increasing order, where $\Delta(T)$ denotes the maximum degree, the Albertson index $\irr(T)$ satisfies the inequality~\eqref{eq1tythmAlbn2}, where $\lambda$ is the average degree of the vertices in $\mathscr{D}$. This result quantifies how the degree distribution and maximum degree constrain the irregularity measured by the Albertson index.

\begin{theorem}~\label{tythmAlbn2}
Let $T$ be a tree with $n$ vertices and $m$ edges, and degree sequence $\mathscr{D}=(d_1,d_2,\dots,d_n)$ where $d_n \geqslant d_{n-1} \geqslant \dots \geqslant d_2 \geqslant d_1$, and let $\Delta(T)$ denote its maximum degree. Then, the Albertson index satisfies
\begin{equation}~\label{eq1tythmAlbn2}
\irr(T) \geqslant n \left(\frac{m(n+1) - \Delta(n-\Delta) + \floor*{\dfrac{2(m-\Delta)^3}{n-2}}}{\lambda^3} \right).
\end{equation}
\end{theorem}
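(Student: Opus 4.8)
The plan is to derive~\eqref{eq1tythmAlbn2} by feeding Lemma~\ref{le.alb5} and the caterpillar identity~\eqref{basiceqn1} into the vertex--mean relation~\eqref{resultvasn1}, and then absorbing the cubic floor term $\floor*{2(m-\Delta)^3/(n-2)}$ into an intermediate square-root estimate. First I would fix the non-increasing ordering $d_n\geqslant d_{n-1}\geqslant\dots\geqslant d_1$, record that for a tree $m=n-1$, hence $\Delta\leqslant m$ and, by~\eqref{resultvasn1}, $\lambda n=2m$. Applying Lemma~\ref{le.alb5} expresses $\irr(T)$ through $\sum_{i}d_i^2$ and the consecutive gaps $\sum_{i}\lvert d_i-d_{i+1}\rvert$; since the degrees are ordered, the gap contribution is at least $\Delta-d_1$ and $\sum_{i}d_i^2\geqslant d_n^2+d_1^2+(n-2)$, which together yield an intermediate lower bound of the shape
\begin{equation}~\label{eq2tythmAlbn2}
\irr(T) > \sqrt{\,m(n+1)-\Delta(n-\Delta)\,},
\end{equation}
the radicand being exactly the leading part of the numerator appearing in~\eqref{eq1tythmAlbn2}.

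Next I would control the floor term: for $n\geqslant 3$ one has $n-2\geqslant 1$ and $m-\Delta\geqslant 0$, so $0\leqslant\floor*{2(m-\Delta)^3/(n-2)}\leqslant 2(m-\Delta)^3/(n-2)$, and using $\Delta\geqslant\lambda$ together with $\lambda n=2m$ and the bound~\eqref{eq2tythmAlbn2}, the aim is to establish
\begin{equation}~\label{eq3tythmAlbn2}
m(n+1)-\Delta(n-\Delta)+\floor*{\frac{2(m-\Delta)^3}{n-2}}\leqslant\frac{\lambda^3}{n}\,\irr(T).
\end{equation}
Once~\eqref{eq3tythmAlbn2} is in hand, multiplying through by $n/\lambda^3>0$ delivers precisely~\eqref{eq1tythmAlbn2}; the finitely many small orders for which~\eqref{eq3tythmAlbn2} is not yet visible from~\eqref{eq2tythmAlbn2} would be settled by direct evaluation of~\eqref{basiceqn1}, where the right-hand side of~\eqref{eq1tythmAlbn2} is an explicit constant.

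The main obstacle is establishing~\eqref{eq3tythmAlbn2} in the regime where $\Delta$ is small while the degree sequence is otherwise nearly flat: there $m-\Delta$ is of order $n$, so the cubic term is genuinely of order $n^{2}$, whereas the ordering alone only forces $\irr(T)$ to exceed $\Delta-d_1$ plus the quadratic degree mass $\sum_{i=2}^{n-1}(d_i-1)(d_i-2)$ from~\eqref{basiceqn1}. Pinning down~\eqref{eq3tythmAlbn2} will therefore require keeping the \emph{full} quadratic sum in~\eqref{basiceqn1} rather than discarding it, and playing it off against $\lambda<2$, which holds for every tree, in order to beat the cubic growth of $(m-\Delta)^3/(n-2)$. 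This balancing of the quadratic degree contribution against the cubic floor term is the delicate point, and it is where the intermediate estimate~\eqref{eq2tythmAlbn2} must be pushed to its sharpest possible form.
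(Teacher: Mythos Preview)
Your proposal is a sketch, not a proof: the crucial step --- your displayed inequality
\[
m(n+1)-\Delta(n-\Delta)+\floor*{\frac{2(m-\Delta)^{3}}{n-2}}\leqslant\frac{\lambda^{3}}{n}\,\irr(T)
\]
is announced as a goal (``the aim is to establish'') and explicitly left open in your final paragraph. Worse, your own observation that $\lambda<2$ for every tree shows the approach cannot close under the standard reading of the parameters. With $\lambda^{3}<8$ and the numerator at least $m(n+1)=n^{2}-1$, the right-hand side of~\eqref{eq1tythmAlbn2} is of order at least $n^{3}/8$; but a path has $\irr(P_{n})=2$ and a star has $\irr(K_{1,n-1})=(n-1)(n-2)$, neither cubic in $n$. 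No sharpening of your square-root estimate $\irr(T)>\sqrt{m(n+1)-\Delta(n-\Delta)}$ can bridge that gap, and the ``quadratic degree mass'' in~\eqref{basiceqn1} is far too small to manufacture a cubic lower bound on $\irr(T)$. The ``delicate balancing'' you propose is therefore not merely delicate --- it is, on these hypotheses, impossible.

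The paper's argument is entirely different and does not touch Lemma~\ref{le.alb5}, identity~\eqref{basiceqn1}, or relation~\eqref{resultvasn1}. It opens by \emph{assuming} $\Delta>16$ and $\delta>4$, then chains through $\floor*{2(m-\Delta)^{2}/(n-2)}<(n+1)m-\Delta(n-\Delta)$ and $\Delta(n-\Delta)\geqslant\delta\lambda^{2}$ to an intermediate bound with $\tfrac{n-1}{2\delta}$ in place of $n/\lambda^{3}$, and finally passes to~\eqref{eq1tythmAlbn2} via $0<n/\lambda^{3}<1$ and $\lambda>\delta$. If you want to follow the paper you must work under those extra hypotheses --- in particular $\delta>4$, which is incompatible with reading $\mathscr{D}$ as the full vertex-degree sequence of a tree (trees have leaves). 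Your caterpillar-formula route and the paper's route are disjoint, and yours, as it stands, runs into an order-of-magnitude obstruction that the plan does not resolve.
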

\begin{proof}
Assume the maximum degree $\Delta = \Delta(T)$ satisfies $\Delta > 16$, and the minimum degree $\delta > 4$. Then, for the degree sequence $\mathscr{D}=(d_1,d_2,\dots,d_n)$ where $d_n \geqslant d_{n-1} \geqslant \dots \geqslant d_2 \geqslant d_1$, we have
\begin{equation}~\label{eq2tythmAlbn2}
\floor*{\dfrac{2(m-\Delta)^2}{n-2}} < (n+1)m - \Delta(n-\Delta).
\end{equation}
From~\eqref{eq2tythmAlbn2}, considering the terms $m(n+1) - \Delta(n-\Delta)$ and $\delta \lambda^2$, we observe that $n \beta \irr + (m \Delta) \geqslant m \beta (n+1) - \Delta \beta (n-\Delta)$. By assuming $\Delta(n-\Delta) \geqslant \delta \lambda^2$, it follows that
\begin{equation}~\label{eq3tythmAlbn2}
\floor*{\dfrac{2(m-\Delta)^2}{n-2}} - m(n+1) - \Delta(n-\Delta) \leqslant \delta \lambda^2.
\end{equation}
Hence, considering the values of $\Delta$ and $\delta$ leads to~\eqref{eq2tythmAlbn2} and \eqref{eq3tythmAlbn2}. Thus, equation~\eqref{eq4tythmAlbn2} provides an upper bound on the normalized quantity
\begin{equation}~\label{eq4tythmAlbn2}
\frac{1}{\lambda^2} \left( \floor*{\dfrac{2(m-\Delta)^2}{n-2}} - m(n+1) - \Delta(n-\Delta) \right) \leqslant \frac{2n-2}{2\delta}.
\end{equation}
Moreover, for the term $m(n+1)$ we have $m(n+1) = n^2 - 1$, and for the term $\Delta(n-\Delta)$, the inequality holds when $2\Delta \leqslant n$ and $\lambda < \Delta$. Then, since $2n - 2 > 2\delta$ and $2n > \lambda^2$, we observe that $\Delta(n-\Delta) > \lambda^2$. According to~\eqref{eq4tythmAlbn2}, for $\lambda^3$ it follows that $0 < \frac{n}{\lambda^3} < 1$. By considering $\lambda > \delta$, we obtain
\begin{equation}~\label{eq5tythmAlbn2}
\irr(T) \geqslant \frac{n-1}{2\delta} \left( m(n+1) - \Delta(n-\Delta) + \floor*{\dfrac{2(m-\Delta)^3}{n-2}} \right).
\end{equation}
Therefore, from~\eqref{eq2tythmAlbn2}--\eqref{eq5tythmAlbn2} we conclude that~\eqref{eq1tythmAlbn2} holds.
\end{proof}
Theorem~\ref{tythmAlbn2} establishes a fundamental lower bound on the Albertson irregularity index $\irr(T)$ of a tree $T$ by consider the average degree $\lambda$ of its degree sequence.  The theorem is significant for extremal graph theory as it facilitates comparisons among trees and supports bounding the Albertson index when exact calculation is difficult. Moreover, it has practical implications in chemical graph theory and network analysis, where the Albertson index correlates with molecular stability and network heterogeneity. This bound enriches the theoretical understanding and broadens applications of degree-based topological indices in graph theory. 
Through Theorem~\ref{tythmAlbn2}, we present in Corollary~\ref{corollarynumbern1} the lower bound of the Albertson index.
\begin{corollary}~\label{corollarynumbern1}
Let $T$ be a tree with $n$ vertices and $m$ edges, according to Theorem~\ref{tythmAlbn2}. Then, 
\begin{equation}~\label{eq1corollarynumbern1}
\irr(T)\geqslant \frac{4m^2}{n}+\floor*{\frac{2n}{3}}+\ceil*{\frac{2n+1}{3}}.
\end{equation}
\end{corollary}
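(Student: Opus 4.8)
The plan is to specialize Theorem~\ref{tythmAlbn2} to an arbitrary tree and then dominate the two elementary terms $\floor*{\frac{2n}{3}}+\ceil*{\frac{2n+1}{3}}$ appearing in~\eqref{eq1corollarynumbern1} by the slack already present in the bound~\eqref{eq1tythmAlbn2}. The first move is to use the two identities valid for every tree $T$: since $T$ is connected and acyclic, $m=n-1$, and consequently the average degree is $\lambda=\frac{2m}{n}=\frac{2(n-1)}{n}$. Substituting $\lambda^{3}=\frac{8(n-1)^{3}}{n^{3}}$ into~\eqref{eq1tythmAlbn2} rewrites the leading contribution $\frac{n\,m(n+1)}{\lambda^{3}}$ as $\frac{n^{4}(n+1)}{8(n-1)^{2}}$, a quantity of order $n^{3}$, hence far larger than the right-hand side of~\eqref{eq1corollarynumbern1}, which is of order $n$.

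Next I would control the two correction terms in the numerator of~\eqref{eq1tythmAlbn2}. Because $0\leqslant\Delta\leqslant n-1$, the elementary estimate $\Delta(n-\Delta)\leqslant\bigl\lfloor\tfrac{n^{2}}{4}\bigr\rfloor$ bounds the subtracted term uniformly, while $\Delta\leqslant m$ makes the floor term $\floor*{\frac{2(m-\Delta)^{3}}{n-2}}$ nonnegative, so it may be discarded in a lower bound. Combining these gives $\irr(T)\geqslant\frac{n}{\lambda^{3}}\bigl(m(n+1)-\bigl\lfloor\tfrac{n^{2}}{4}\bigr\rfloor\bigr)=\frac{n^{4}}{8(n-1)^{3}}\bigl(n^{2}-1-\bigl\lfloor\tfrac{n^{2}}{4}\bigr\rfloor\bigr)$, and $n^{2}-1-\bigl\lfloor\tfrac{n^{2}}{4}\bigr\rfloor\geqslant\tfrac{3n^{2}}{4}-1$. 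The aim of this step is to extract a clean polynomial lower bound $\irr(T)\geqslant P(n)$ with $P(n)=\Theta(n^{3})$.

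It then remains to verify the purely arithmetic inequality $P(n)\geqslant\frac{4(n-1)^{2}}{n}+\floor*{\frac{2n}{3}}+\ceil*{\frac{2n+1}{3}}$. I would first rewrite $\floor*{\frac{2n}{3}}+\ceil*{\frac{2n+1}{3}}$ in closed form according to the residue of $n$ modulo $3$ (it equals $\tfrac{4n}{3}+1$ when $3\mid n$, and a quantity of the same leading order in the other two cases), so that, after clearing the denominator $n$, the claim becomes an honest polynomial inequality in $n$; its cubic left side outgrows the linear right side for all $n$ beyond a small explicit threshold, and the finitely many remaining values of $n$ are checked by hand. Substituting the resulting estimate back into~\eqref{eq1tythmAlbn2} yields~\eqref{eq1corollarynumbern1}.

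The step I expect to be the main obstacle is reconciling the reduction with the running hypotheses under which~\eqref{eq1tythmAlbn2} was obtained (namely $\Delta>16$ and $\delta>4$): one must argue that discarding the floor term and using $\Delta(n-\Delta)\leqslant\lfloor n^{2}/4\rfloor$ does not conflict with those assumptions, or else re-establish~\eqref{eq1corollarynumbern1} directly on the extremal trees — the star $\mathscr{D}=(n-1,1,\dots,1)$ and the path — which are the natural tightness tests for both sides. Once that compatibility is settled, the rest is a routine, if slightly tedious, comparison of polynomials.
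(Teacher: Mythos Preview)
The paper supplies no argument for this corollary at all; it is simply asserted as a consequence of Theorem~\ref{tythmAlbn2}. Your strategy of specializing the bound~\eqref{eq1tythmAlbn2} and comparing orders of growth is therefore exactly the paper's implied route, and your computations (the substitution $\lambda^{3}=8(n-1)^{3}/n^{3}$, the estimate $\Delta(n-\Delta)\leqslant\lfloor n^{2}/4\rfloor$, the observation that the resulting bound is $\Theta(n^{3})$ while the target is $\Theta(n)$) are all correct as far as they go.

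There are, however, two genuine gaps, one of which you already flagged. First, the hypothesis $\delta>4$ used in the proof of Theorem~\ref{tythmAlbn2} is \emph{never} satisfied by a tree, since every tree on $n\geqslant 2$ vertices has a leaf and hence $\delta=1$; your proposed fallback of checking the star and the path directly does not rescue the situation, because for these trees the inequality~\eqref{eq1corollarynumbern1} actually fails (e.g.\ $\irr(P_{n})=2$ while the right side exceeds $4$). Second, your identification $\lambda=2m/n=2(n-1)/n<2$ does not match the paper's usage: throughout the paper $\lambda$ denotes the average of the entries of the sequence $\mathscr{D}=(d_{1},\dots,d_{n})$, and in every tabulated example (cf.\ Table~\ref{tab01minimumalb}) these are the degrees of the spine vertices of a caterpillar, not the full degree sequence of the tree, so $\lambda$ is typically much larger than $2$. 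With the paper's $\lambda$, the factor $n/\lambda^{3}$ in~\eqref{eq1tythmAlbn2} is far smaller than you compute, and the cubic-versus-linear comparison collapses.

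In short, the paper offers nothing to compare against, and your outline --- while it captures the intended mechanism --- rests on a reading of $\lambda$ inconsistent with the paper's convention and on a theorem whose hypotheses exclude all trees. Neither obstacle is addressed in the paper itself.
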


Theorem~\ref{tythmAlbn3} consider $\lambda_{\max}$ and $\lambda_{\min}$ denote the maximum and minimum values of $\lambda$, where $\lambda$ is the average degree in the degree sequence $\mathscr{D}=(d_1,d_2,\dots,d_n)$ with $d_n \geq d_{n-1} \geq \cdots \geq d_1$.

\begin{theorem}~\label{tythmAlbn3}
Let $T$ be a tree with $n$ vertices and $m$ edges, and let $\lambda_{\max}$, $\lambda_{\min}$ be the maximum and minimum values of $\lambda$. Then,
\begin{equation}~\label{eq1tythmAlbn3}
\frac{4m^2 - 2m((\Delta + \lambda_{\min})(n - 1))}{3\Delta + n} \leq \irr(T) \leq \frac{m\alpha^3 + \lambda_{\max}(n - 1)}{n + 4 \lambda \delta^2}.
\end{equation}
\end{theorem}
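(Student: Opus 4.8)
The plan is to treat the two halves of~\eqref{eq1tythmAlbn3} separately, using as the single structural input the identity $\irr(T)=n\sum_{i=1}^n d_i^2-4m^2$ from Proposition~\ref{propresentedn1} together with the tree relation $m=n-1$; each one-sided bound then becomes a statement about $\sum_{i=1}^n d_i^2$, and the interval $[\lambda_{\min},\lambda_{\max}]$ is taken to be the range of the average degree $\lambda=2m/n$ as the degree sequence $\mathscr{D}$ varies over those compatible with the prescribed $(n,m)$, so that $\lambda_{\min}$ governs the lower estimate and $\lambda_{\max}$ the upper one. First I would fix $\delta=d_1$, $\Delta=d_n$, and record the obvious split $\sum_{i=1}^n d_i^2=d_1^2+d_n^2+\sum_{i=2}^{n-1}d_i^2$.

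For the lower bound I would isolate the contribution of the extremal vertices and bound the middle block below by $(\Delta+\lambda_{\min})(n-1)-\delta\Delta$, an estimate in the style of the middle-term handling in the proof of Theorem~\ref{tythmAlbn1}. Substituting into $\irr(T)=n\sum d_i^2-4m^2$ and collecting terms produces the numerator $4m^2-2m(\Delta+\lambda_{\min})(n-1)$ after dividing through by $n$; the denominator $3\Delta+n$ is then justified exactly as in~\eqref{eq3tythmAlbn1}, where $\lfloor (3n^2-1)/2\rfloor<2\Delta(\Delta-1)^2$ lets one trade $n$ for the maximum-degree-corrected quantity $3\Delta+n$ without loss. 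When $\Delta+\lambda_{\min}\geqslant 2$ the numerator is non-positive and the inequality collapses to $\irr(T)\geqslant 0$, which is immediate; the substantive regime is the complementary one, handled by the substitution above, and the outcome should be consistent with (indeed refine) the $4m^2/n$ term of Corollary~\ref{corollarynumbern1}.

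For the upper bound I would start from the variance inequality~\eqref{baseicequn2}, $\irr(T)\leqslant\sqrt{m\left(\sum_{i=1}^n d_i^2\right)-4m^2}$, and bound $\sum_{i=1}^n d_i^2$ above by $\alpha^3+\lambda_{\max}(n-1)$, using the monotone chain $\alpha d_1<\beta d_2<\alpha d_3<\cdots$ from the proof of Proposition~\ref{maxresseptn1} to absorb the leading square contributions into $\alpha^3$ and the remaining $n-1$ terms into the $\lambda_{\max}$-average. The division by $n+4\lambda\delta^2$ must then kill the square root: here I would invoke $2m(m+1)\leqslant\alpha^2\irr_{\max}(T)$ from that same proof and the regime $\delta>2$, $\lambda>2$ used in Lemma~\ref{lem1maxvalue}, so that $n+4\lambda\delta^2$ dominates $\sqrt{m}$ and the comparison $\sqrt{m\sum d_i^2-4m^2}\leqslant (m\alpha^3+\lambda_{\max}(n-1))/(n+4\lambda\delta^2)$ becomes an honest inequality between two monotone quantities in $n$.

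I expect the denominators to be the main obstacle: the lower bound needs $3\Delta+n$ to be \emph{small} enough that dividing the quadratic-in-$m$ numerator does not overshoot $\irr(T)=n\sum d_i^2-4m^2$, while the upper bound needs $n+4\lambda\delta^2$ to be \emph{large} enough to swallow the square root coming from~\eqref{baseicequn2}. Both requirements funnel into the inequality $2\Delta<(n-1)^2$ of~\eqref{eq3tythmAlbn1} and into $2m(m+1)\leqslant\alpha^2\irr_{\max}(T)$; once these are secured, combining the two one-sided estimates yields~\eqref{eq1tythmAlbn3}.
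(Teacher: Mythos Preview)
Your route diverges from the paper's at the very first step. The paper does \emph{not} pass through the identity $\irr(T)=n\sum_i d_i^2-4m^2$ or through the variance bound~\eqref{baseicequn2}; instead it argues the lower half of~\eqref{eq1tythmAlbn3} from Theorem~\ref{tythmAlbn2} together with the comparison $\irr(T)>3\Delta+n$ (which is how the denominator $3\Delta+n$ enters), and it argues the upper half from the pair of inequalities $(\Delta-1)^3\irr(T)>m\alpha^3+\lambda_{\max}(n-1)$ and $\irr(T)<n+\lambda\delta^2$ under $\delta\geqslant 2$, $\lambda_{\max}\approx\lambda$. None of your machinery—the $\sum d_i^2$ decomposition, the square-root bound, or the reference to~\eqref{eq3tythmAlbn1}—appears in the paper's argument.

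Independent of that mismatch, your upper-bound sketch has a concrete failure. The estimate you need, $\sum_{i=1}^n d_i^2\leqslant \alpha^3+\lambda_{\max}(n-1)$, is false already for stars: with $\mathscr{D}=(1,\dots,1,n-1)$ one has $\alpha=(d_1+d_2)/2=1$, $\lambda_{\max}<2$, so the right-hand side is below $2n-1$, while $\sum d_i^2=(n-1)^2+(n-1)$ is quadratic in $n$. The ``monotone chain'' borrowed from Proposition~\ref{maxresseptn1} does not rescue this, because there it is used to bound $\irr_{\max}$ \emph{above}, not $\sum d_i^2$, and the absorbed term is $(\Delta-3)^2$, not $\alpha^3$. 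The subsequent step—claiming that $n+4\lambda\delta^2$ ``dominates $\sqrt{m}$'' and therefore kills the square root coming from~\eqref{baseicequn2}—does not yield the inequality you want either: even if $n+4\lambda\delta^2\geqslant\sqrt{m}$, dividing $\sqrt{m\sum d_i^2-4m^2}$ by $n+4\lambda\delta^2$ does not produce the numerator $m\alpha^3+\lambda_{\max}(n-1)$ without an additional multiplicative relation that you have not supplied. Finally, note that the formula in Proposition~\ref{propresentedn1} on which your lower-bound calculation rests sums over \emph{all pairs} $\{u,v\}\subseteq V(G)$, not over edges, so it is the total irregularity rather than the Albertson index; using it as a stand-in for $\irr(T)$ in a tree (where the two differ by all the non-adjacent pairs) leaves a gap in the lower-bound direction as well.
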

\begin{proof}
Assume $\lambda$ is the average of the degrees in the degree sequence $\mathscr{D} = (d_1, d_2, \dots, d_n)$ where $d_n \geqslant d_{n-1} \geqslant \dots \geqslant d_2 \geqslant d_1$. Then for $\lambda_{\max}$ and $\lambda_{\min}$ as the maximum and minimum values of $\lambda$, we have $3\delta < \lambda_{\min} \leqslant \lambda_{\max} < \Delta$.

According to Theorem~\ref{tythmAlbn2}, we find that $\irr(T) < m(n+1) - \Delta(n - \Delta)$ and clearly $\irr(T) < 4m^2$. Then,
\begin{equation}~\label{eq2tythmAlbn3}
4m^2 \irr(T) > 2m((\Delta + \lambda_{\min})(n - 1)),
\end{equation}
where $2m((\Delta + \lambda_{\min})(n - 1)) > 4m^2$. Thus, from~\eqref{eq2tythmAlbn3} we observe that the left-hand side of the inequality~\eqref{eq1tythmAlbn3} holds, considering that the value $3\Delta + n$ satisfies Albertson index $\irr(T) > 3\Delta + n$.

To prove the right-hand side of the inequality, consider that $\alpha$ is the average of both $d_1$ and $d_2$. We find that this value increases as $m$ increases. Hence,
\begin{equation}~\label{eq3tythmAlbn3}
(\Delta - 1)^3 \irr(T) > m\alpha^3 + \lambda_{\max} (n - 1).
\end{equation}
From~\eqref{eq3tythmAlbn3}, when $\delta \geqslant 2$ we note that $\irr(T) < n + \lambda \delta^2$ and $\irr(T) < m\alpha^3 + \lambda_{\max} (n - 1)$. Then, assuming $\lambda_{\max} \approx \lambda$ satisfies
\begin{equation}~\label{eq4tythmAlbn3}
\irr(T) \leq \frac{m\alpha^3 + \lambda_{\max}(n - 1)}{n + 4 \lambda \delta^2}.
\end{equation}
Thus, from~\eqref{eq4tythmAlbn3} we observe that the right-hand side of the inequality~\eqref{eq1tythmAlbn3} holds.
\end{proof}

Actually, Theorem~\ref{tythmAlbn3} establishes explicit lower and upper bounds for the irregularity $\irr(T)$ of a tree in terms of its number of vertices $n$, edges $m$, maximum degree $\Delta$, it offers a deeper understanding of how a tree's irregularity relates simultaneously to degree sequence. This insight facilitates the classification of trees based on spectral and combinatorial features.

\begin{theorem}~\label{tythmAlbn4}
Let $T$ be a tree with $n$ vertices and $m$ edges. Then, the upper bound for the Albertson index satisfies
\begin{equation}~\label{eq1tythmAlbn4}
\irr(T) \leqslant \alpha \floor*{\frac{2n}{3}} + \beta \ceil*{\frac{2n+1}{3}}.
\end{equation}
\end{theorem}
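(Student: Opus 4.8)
The plan is to derive \eqref{eq1tythmAlbn4} from the right-hand inequality of Theorem~\ref{tythmAlbn3} together with the identity $m = n-1$ valid for every tree, controlling the resulting rational expression by the weighted combination of the floor and ceiling terms. First I would invoke Theorem~\ref{tythmAlbn3} to obtain $\irr(T) \leqslant \frac{m\alpha^3 + \lambda_{\max}(n-1)}{n + 4\lambda\delta^2}$ and substitute $m = n-1$, so that both numerator and denominator become explicit in $n$ with coefficients depending only on $\alpha$, $\beta$, $\delta$ and $\lambda$; the common factor $n-1$ can then be partially cancelled after bounding $\lambda_{\max} \leqslant \Delta$.

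Next I would estimate the target quantity from below. Since $\beta > \alpha > d_1 \geqslant 1$ by the ordering of $\mathscr{D}$, and since $\floor*{\frac{2n}{3}} + \ceil*{\frac{2n+1}{3}}$ equals $n$ plus a bounded correction depending on $n \bmod 3$, we have $\alpha \floor*{\frac{2n}{3}} + \beta \ceil*{\frac{2n+1}{3}} \geqslant (\alpha+\beta)\,\frac{2n-2}{3}$. The crucial step is then to check the numerical inequality $(\alpha+\beta)\frac{2n-2}{3}(n + 4\lambda\delta^2) \geqslant (n-1)(\alpha^3 + \Delta)$, which after dividing by $n-1$ reduces to $(\alpha+\beta)(n + 4\lambda\delta^2) \geqslant \tfrac{3}{2}(\alpha^3 + \Delta)$; this holds under the same hypotheses $\delta \geqslant 2$ and $n \geqslant \lambda$ already exploited in the proof of Theorem~\ref{tythmAlbn3}, because then $4\lambda\delta^2 \geqslant \alpha^2$ forces $(\alpha+\beta)\cdot 4\lambda\delta^2 \geqslant \alpha^3$ while the term $(\alpha+\beta)n$ absorbs $\tfrac{3}{2}\Delta$.

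As a consistency check I would compare with Corollary~\ref{corollarynumbern1}, whose lower bound $\irr(T) \geqslant \frac{4m^2}{n} + \floor*{\frac{2n}{3}} + \ceil*{\frac{2n+1}{3}}$ carries exactly the same floor and ceiling terms; the two estimates together confine $\irr(T)$ to a window whose width is governed by $\alpha$ and $\beta$, confirming that \eqref{eq1tythmAlbn4} has the correct order of magnitude and that the constants in front of the rounded terms cannot be improved to anything below $\alpha$ and $\beta$ respectively.

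The main obstacle I anticipate is keeping the rounding in the correct direction: since the goal is an upper bound on $\irr(T)$, I must only ever lower-bound $\alpha \floor*{\frac{2n}{3}} + \beta \ceil*{\frac{2n+1}{3}}$, so the residues of $2n$ and $2n+1$ modulo $3$ have to be treated conservatively in a short three-case analysis to rule out any off-by-one failure for small $n$. A secondary difficulty is that the denominator $n + 4\lambda\delta^2$ of Theorem~\ref{tythmAlbn3} degenerates when $\delta = 1$ (a leaf-heavy, star-like tree), so the argument will likely require the star case to be isolated and verified directly: computing $\irr(T)$ from \eqref{basiceqn1} for a star and checking \eqref{eq1tythmAlbn4} by hand, then handling the remaining trees with $\delta \geqslant 2$ by the main estimate above.
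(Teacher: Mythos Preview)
Your route is genuinely different from the paper's. The paper does not invoke Theorem~\ref{tythmAlbn3} at all; instead it starts from Corollary~\ref{corollarynumbern1}, introduces the auxiliary comparison $\sqrt{\Delta(\Delta-1)^2} \leqslant \floor*{\frac{2n}{3}} + \ceil*{\frac{2n+1}{3}} < \irr(T)$, and then spends most of the argument evaluating $\floor*{\frac{2n}{3}}+\ceil*{\frac{2n+1}{3}}$ exactly (your three-case residue analysis is replaced by the closed forms~\eqref{eq3tythmAlbn4}--\eqref{eq4tythmAlbn4}, together with a generating-function identity). From those computations the paper derives the pair of inequalities~\eqref{eq5tythmAlbn4} and declares the proof complete. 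Note, incidentally, that every displayed inequality in the paper's argument is a \emph{lower} bound on $\irr(T)$, and the last line claims only that~\eqref{eq2tythmAlbn4} holds, not~\eqref{eq1tythmAlbn4}; so the paper's route does not in fact establish the stated upper bound either, and your instinct to look elsewhere (namely to the right-hand inequality of Theorem~\ref{tythmAlbn3}) is reasonable.

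That said, your plan has a structural gap you should confront before investing in the algebra. You propose to treat $\delta\geqslant 2$ as the ``main case'' and isolate the star-like trees with $\delta=1$ by hand. But every tree with at least two vertices has a leaf, so if $\delta$ is the minimum degree of $T$ in the usual sense then $\delta=1$ is the \emph{only} case and your main estimate is vacuous. In this paper $\mathscr{D}=(d_1,\dots,d_n)$ records the degrees of the backbone vertices of a caterpillar (see~\eqref{basiceqn1} and the tables), so $d_1$ can indeed exceed~$1$; but then the ``star case'' you plan to verify directly via~\eqref{basiceqn1} is not the case $\delta(T)=1$ at all, and your case split needs to be reformulated in terms of $d_1$ rather than $\delta(T)$. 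Until that is clarified, the inequality $4\lambda\delta^2\geqslant\alpha^2$ you rely on (and hence the whole chain $(\alpha+\beta)\cdot 4\lambda\delta^2 \geqslant \alpha^3$) has no hypothesis supporting it.
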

\begin{proof}
According to Corollary~\ref{corollarynumbern1}, the lower bound of the Albertson index is given in~\eqref{eq1corollarynumbern1}. Thus, we note that
\begin{equation}~\label{eq2tythmAlbn4}
\sqrt{\Delta(\Delta-1)^2} \leqslant \floor*{\frac{2n}{3}} + \ceil*{\frac{2n+1}{3}} < \irr(T).
\end{equation}
Then, from~\eqref{eq2tythmAlbn4} we have
\begin{equation}~\label{eq3tythmAlbn4}
\ceil*{\frac{2n+1}{3}} = \begin{cases}
m + 1, & \text{if } n < 1, \\
m + 2, & \text{if } n \geqslant 1.
\end{cases}
\end{equation}
Therefore, if we want to treat it as a function, it can be generated by
\[
\floor*{\frac{2n}{3}} + \ceil*{\frac{2n+1}{3}} = f(x) = \frac{x^2(x+1) + x^2 + 1}{(x-1)^2 (x^2 + x + 1)}.
\]
It holds for the sum in~\eqref{eq2tythmAlbn4} as
\begin{equation}~\label{eq4tythmAlbn4}
\floor*{\frac{2n}{3}} + \ceil*{\frac{2n+1}{3}} = m + \ceil*{\frac{2n+1}{3}} = 
\begin{cases}
2m + 1, & n < 1, \\
2m + 2, & n \geqslant 1.
\end{cases}
\end{equation}
Thus, from~\eqref{eq3tythmAlbn4} and \eqref{eq4tythmAlbn4} we find that according to~\eqref{eq2tythmAlbn4}, it holds that $\sqrt{\Delta(\Delta - 1)^2} \leqslant n + 1 < \irr(T).$ Then,
\begin{equation}~\label{eq5tythmAlbn4}
\irr(T) \geqslant \alpha \floor*{\frac{2n}{3}} + \sqrt{\Delta(\Delta - 1)^2}, \quad \irr(T) \geqslant \beta \ceil*{\frac{2n+1}{3}} + \sqrt{\Delta(\Delta - 1)^2}.
\end{equation}
Finally, from~\eqref{eq5tythmAlbn4} we conclude that~\eqref{eq2tythmAlbn4} holds.
\end{proof}

\begin{corollary}~\label{corollarynumbern2}
Let $T$ be a tree with $n$ vertices and $m$ edges, according to Theorem~\ref{tythmAlbn4} for $n\in \mathbb{R}$ and $\frac{2n}{3}\notin \mathbb{Z}$. Then, 
\begin{equation}~\label{eq1corollarynumbern2}
\floor*{\frac{2n}{3}} + \ceil*{\frac{2n+1}{3}}=\frac{5}{6}+\frac{2n}{3}+\floor*{\frac{2n}{3}}+\frac{1}{\pi} \sum_{k=1}\left(\frac{1}{k}\sin{\left(\frac{2\pi k+4\pi nk}{3} \right)}  \right).
\end{equation}
\end{corollary}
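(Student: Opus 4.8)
The plan is to recognize \eqref{eq1corollarynumbern2} as the classical Fourier (sawtooth) expansion of the fractional-part function, evaluated at the single point $x=\tfrac{2n+1}{3}$, together with an elementary rewriting of the combination $\floor*{\tfrac{2n}{3}}+\ceil*{\tfrac{2n+1}{3}}$ that already appears as the controlling term in Theorem~\ref{tythmAlbn4}. The analytic input I would use is the standard pointwise identity: for every real $x\notin\mathbb{Z}$,
\[
x-\floor*{x}=\frac12-\frac1\pi\sum_{k=1}^{\infty}\frac{\sin(2\pi kx)}{k},
\]
the series converging conditionally (Dirichlet's test, equivalently the convergence theorem for Fourier series of $1$-periodic functions of bounded variation). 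Before anything else I would pin down the genericity hypothesis precisely: the step replacing a ceiling by a fractional part needs $\tfrac{2n+1}{3}\notin\mathbb{Z}$, so I would read (or state) the corollary under that assumption — at the excluded values every sine in the series vanishes while $\ceil*{\tfrac{2n+1}{3}}$ jumps, so the identity genuinely cannot hold there, whereas it does hold at every $n$ with $\tfrac{2n+1}{3}\notin\mathbb{Z}$ (including those where $\tfrac{2n}{3}$ happens to be an integer).

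Granting that, the computation is short. I would use $\ceil*{y}=y+1-(y-\floor*{y})$, valid for $y\notin\mathbb{Z}$, with $y=\tfrac{2n+1}{3}$, and substitute the sawtooth expansion to obtain
\[
\ceil*{\tfrac{2n+1}{3}}=\tfrac{2n+1}{3}+\tfrac12+\frac1\pi\sum_{k=1}^{\infty}\frac1k\sin\bigl(2\pi k\cdot\tfrac{2n+1}{3}\bigr).
\]
Then I would record the two bookkeeping facts $2\pi k\cdot\tfrac{2n+1}{3}=\tfrac{2\pi k+4\pi nk}{3}$ (matching the sine argument in the statement) and $\tfrac{2n+1}{3}+\tfrac12=\tfrac{2n}{3}+\tfrac56$, and add $\floor*{\tfrac{2n}{3}}$ to both sides — this term needs no expansion, since $\floor*{\cdot}$ is defined on all of $\mathbb{R}$. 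The outcome is exactly
\[
\floor*{\tfrac{2n}{3}}+\ceil*{\tfrac{2n+1}{3}}=\tfrac56+\tfrac{2n}{3}+\floor*{\tfrac{2n}{3}}+\frac1\pi\sum_{k=1}^{\infty}\frac1k\sin\bigl(\tfrac{2\pi k+4\pi nk}{3}\bigr),
\]
which is \eqref{eq1corollarynumbern2}; the link to Theorem~\ref{tythmAlbn4} is then simply that the right-hand side is a closed analytic form for the quantity bounding $\irr(T)$ in \eqref{eq1tythmAlbn4}.

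The one point I expect to require genuine care — the hard part — is the analytic justification of the sawtooth expansion at the specific argument $x=\tfrac{2n+1}{3}$: the series is only conditionally convergent, so term-by-term manipulation is not free, and the cleanest route is to invoke (or re-derive by summation by parts against the Dirichlet kernel) the pointwise convergence of the Fourier series of the $1$-periodic sawtooth at each non-integer point, after which the substitution above is legitimate. Everything else — the ceiling-to-fractional-part rewrite, the matching of the trigonometric argument, and the arithmetic $\tfrac13+\tfrac12=\tfrac56$ — is routine once the exceptional set $\{\,n:\tfrac{2n+1}{3}\in\mathbb{Z}\,\}$ has been explicitly removed, which I would flag as a small correction to the stated hypothesis.
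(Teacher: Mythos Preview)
The paper states Corollary~\ref{corollarynumbern2} without proof, so there is nothing to compare against directly. Your argument is correct and is the natural one: the identity is nothing more than the Fourier (sawtooth) expansion of the fractional part applied to $y=\tfrac{2n+1}{3}$, combined with the elementary rewrite $\ceil{y}=y+1-\{y\}$ for $y\notin\mathbb{Z}$ and the arithmetic $\tfrac{2n+1}{3}+\tfrac12=\tfrac{2n}{3}+\tfrac56$. Each step checks, and the convergence of the series at non-integer arguments is indeed the one place where a standard reference (Dirichlet's test or the BV Fourier convergence theorem) is needed.

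Your remark on the hypothesis is also well taken and worth keeping: the identity requires $\tfrac{2n+1}{3}\notin\mathbb{Z}$, whereas the paper assumes $\tfrac{2n}{3}\notin\mathbb{Z}$. These conditions are genuinely different (for integer $n$ they correspond to $n\not\equiv 1\pmod 3$ versus $n\not\equiv 0\pmod 3$), and only the former is the correct exclusion --- at points with $\tfrac{2n+1}{3}\in\mathbb{Z}$ every sine term vanishes while the ceiling jumps, so \eqref{eq1corollarynumbern2} fails there exactly as you say.
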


\section{Conclusion}\label{sec6}
Through this paper, we provide significant new bounds and structural insights regarding the Albertson index for trees based on their degree sequences. We have derived explicit inequalities that tightly constrain both the minimum and maximum values of the Albertson index, connecting them to important graph parameters such as the maximum degree, number of edges, and degree sums and powers.

In particular, Propositions~\ref{resseptn1}–\ref{resseptn3} give lower bounds that relate the minimal Albertson index to degree cubics and quadratics, illustrating the influence of degree irregularity distribution in trees. Propositions~\ref{maxresseptn1} and \ref{maxresseptn2}, together with Lemmas~\ref{lem1maxvalue} and \ref{lem2irrvalue}, further characterize upper bounds and the gap between minimal and maximal index values, highlighting how degree extrema and combinatorial parameters limit the Albertson index variability. The main Theorems~\ref{tythmAlbn1}–\ref{tythmAlbn4} consolidate these findings to provide comprehensive lower and upper bounds for the Albertson index in terms of degree sequences, maximum and minimum degrees, and auxiliary parameters $\lambda, \alpha, \beta$. These bounds integrate various combinatorial components such as floor and ceiling functions, reinforcing the discrete nature of the problem. Corollaries~\ref{corollarynumbern1} and \ref{corollarynumbern2} offer refined numeric and analytic expressions that complement the theoretical results and enable practical computation of Albertson index estimates.

This work advances the understanding of the Albertson index as a metric of graph irregularity by furnishing sharp, degree-sequence-dependent bounds for trees. These results are expected to facilitate further research on extremal problems in topological indices and to support applications where degree irregularity must be quantitatively controlled or estimated. Future extensions may consider more general graph classes or explore algorithmic approaches for exact index computation based on these theoretical foundations.

\section*{Declarations}
\begin{itemize}
	\item Funding: Not Funding.
	\item Conflict of interest/Competing interests: The author declare that there are no conflicts of interest or competing interests related to this study.
	\item Ethics approval and consent to participate: The author contributed equally to this work.
	\item Data availability statement: All data is included within the manuscript.
\end{itemize}

\end{document}